\definecolor{purple}{rgb}{0.65, 0, 1}
\definecolor{orange}{rgb}{1,.5,0}
\definecolor{brown}{rgb}{.9,.73,.26}
\newtheorem{theorem}{Theorem}[section]
\newtheorem{remark}{Remark}[section]
\newtheorem{proposition}[theorem]{Proposition}
\newtheorem{definition}{Definition}[section]
\newtheorem{lemma}[theorem]{Lemma}
\numberwithin{equation}{section}
\def\R{{\mathbb {R}}}
\newcommand{\abs}[1]{\lvert#1\rvert}
\def\R{\mathbb{R}}
\def\N{\mathbb{N}}
\title{\bf{Local and global solvability of the Grushin heat equation with memory}}
\author{Ahmad Z. Fino \& Arlucio Viana}
\date{}
\begin{document}
\maketitle



\begin{abstract} 
	In this work, we investigate the solvability of a heat equation involving the Grushin operator. The equation is perturbed by two nonlinear reaction terms, one of which includes a memory component, introducing nonlocal effects in time. We first establish local well-posedness results in both Lebesgue spaces and in the space of continuous functions vanishing at infinity. Furthermore, we develop and apply comparison principles that allow us to derive sufficient conditions for the global existence of solutions, depending on the relative strength and nature of the nonlinearities involved. In particular, we highlight how the presence of the memory term influences global solvability. Our results contribute to the understanding of parabolic equations with degenerate operators and complex nonlinear interactions.

\end{abstract}

\section{Introduction}

Global existence of solutions for the heat equation with power-type nonlinearities goes back to Fujita's paper \cite{Fujita}, where he proves existence and nonexistence results for the semilinear heat equation
\begin{equation}\label{sheat}
	\begin{array}{ll}
		u_t(x,t) = \Delta u(x,t) + |u(x,t)|^{\rho-1}u(x,t),&\quad  \mbox{in}\ \R^N \times (0,\infty),\\
		{}\\
		u(x,0)=u_0(x),&\quad  \mbox{in}\ \R^N ,
	\end{array}
\end{equation} 
The critical case $\rho = 1+\frac{2}{N}$ was resolved by Weissler \cite{Weissler}. Since then many works have been devoted to study results of this type, and many of them predicted the critical Fujita exponent from the critical space, which is given by a scaling computation. Later on, Brézis and Cazenave treated the local well-posedness for \eqref{sheat} in Lebesgue spaces. Nonetheless, nonlocal nonlinearities play an important role in the analysis of the evolution equations, for example, dissociating the critical Fujita exponent to the one given by scaling procedures. In fact, Cazenave, Dickstein and Weissler \cite{CDW} proved that the critical exponent for 
$$ u_t-\Delta u=\int_0^t(t-s)^{-\gamma}|u|^{p-1} u(s) d s \quad x \in \mathbb{R}^N, t>0 ,$$
where $0 \leq \gamma<1$, $p>1$, is $p_*=\max \left\{\frac{1}{\gamma}, p_\gamma\right\}$, where
$$
p_\gamma=1+\frac{2(2-\gamma)}{(N-2+2 \gamma)_{+}} ,
$$
while the exponent predicted by the scaling is $p_{\mathrm{sc}}=1+\frac{4-2 \gamma}{N}$. Fino and Kirane \cite{Fino-Ki-12} generalized that result by replacing the Laplacian with the fractional Laplacian. This type of investigation is became noticeable, because equations with distinct operators often have different scaling exponents, which bring new difficulties inherent to the equations, see e.g. \cite{Cas-Loy-19,Dao-Fino-22,Ki-La-T-05,Loy-P-14}. 

At the same time, sub-elliptic operators are of great scientific interest partly for they are naturally connected with anisotropic diffusion phenomena. Examples of such operators are the Heisenberg sub-Laplacian, Kohn Laplacians, Grushin operator. Each one of them carries its own geometric properties that highly influence its analysis, see e.g. \cite{Alves-G-Lo-24,Calin-11, Chang-Li-12,Gaveau-77,Goldstein-Kogoj, Kogoj-Lan-12,Wu-15}. Heat kernels of those type of operators have been studied, for instance, in \cite{ Ba-Furutani-15, Arous-89, Chang-Li-15, Gaveau-77}. We can find a good survey of techniques to find the heat kernel of sub-elliptic operators in Callin's \textit{et al} monograph \cite{Calin-11}. 

Recently, there is an increasing interest for nonlinear parabolic equations with sub-elliptic operators, see e.g. the recent papers \cite{Fino-Ruz-Tor-24, Hi-Oka-24,J-Ki-S-16,RuzY-22}. Those papers lead mostly with sub-Laplacian operators and do not include the Grushin
\begin{equation*}\label{Grushiop}
	\Delta_{\mathcal{G}}=\dfrac{1}{2}\left(\Delta_{x}+|x|^2\Delta_{y}\right), 
\end{equation*}
where $\Delta_x, \Delta_y$ denote the classical Laplacian in the variables $x\in\R^N$ and $y\in\R^k$, respectively. In this direction, Oliveira and Viana \cite{Oli-Vi-23} proved the existence, uniqueness, continuous dependence and blowup alternative of local mild solutions for 
\begin{equation}\label{sheatG}
	\begin{array}{ll}
		u_t = \Delta_\mathcal{G} u + |u|^{\rho-1}u,&\quad \mbox{in}\  \R^{N+k}\times (0,\infty) ,\\ 
		u(0)=u_0,&\quad \mbox{in}\ \R^{N+k},
	\end{array}
\end{equation} 
with initial conditions in Lebesgue spaces. Through the analysis of the heat kernel associated to the Grushin operator, they obtain the existence of global solutions in the special case of $u_0\in L^{\frac{ N}{2}(\rho-1)}(\R^{N+k})$ with the sufficiently small norm. Later on, Kogoj, Lima and Viana \cite{Kogoj-Lima-Viana-24} by working in Marcinkiewcz spaces, they allowed larger (in Lebesgue norm) initial data to be taken into account in order to obtain global solutions. More precisely, they gave sufficient conditions to the existence and uniqueness of mild solutions for \eqref{sheatG}, with initial conditions in the critical Marcinkiewcz space $L^{p,\infty}(\R^{N+k})$, with 
\begin{equation}\label{p}
	p = \frac{N+2k}{2}(\rho-1) .
\end{equation}
Moreover, they obtained the existence of positive, symmetric and self-similar solutions. The number $N+2k$ is the so-called homogeneous dimension attached to Grushin's operator (see \cite{Kogoj-Lan-18}).

Motivated by those facts, we recognize the relevancy of Fujita exponents for  heat equations with the Grushin operator and memory: 
\begin{equation}\label{*}
	\left\{\begin{array}{ll}
		\displaystyle {u_{t}-\Delta_{\mathcal{G}}
			u = k_1 \int_0^t(t-s)^{-\gamma}\abs{u}^{p_1-1}u(s)\,\mathrm{d}s} + k_2|u|^{p_2-1}u, &\displaystyle {t>0,z\in {\mathbb{R}}^{N+k},}\\
		{}\\
		\displaystyle{u(x,0)=  u_0(z),\qquad\qquad}&\displaystyle{z=(x,y)\in {\mathbb{R}}^{N+k},}
	\end{array}
	\right.
\end{equation}
where $u_0 \in X$, $k_1,k_2\in \R$, and $p_1,p_2>1$.  Here, $X$ denotes either $L^q(\mathbb{R}^{N+k})$, with $1\leq q< \infty$, or $C_0(\mathbb{R}^{N+k})$. In this manuscript we address to the problem of existence, uniqueness, continuous dependence  and a blow-up alternative for local solutions of \eqref{*} as well as the existence of positive global solution for \eqref{*}.

The local existence results are similar to those given in \cite{BrezisCaz} and \cite{CDW}, in the case of the classical diffusion equation and the heat equation with memory in the source, respectively. The equation \eqref{*} without memory, that is, $k_1=0$, was studied in Oliveira and Viana \cite{Oli-Vi-23}.

We prove results on the existence of local solutions for \eqref{*}, when initial conditions are either in Lebesgue spaces or are continuous functions vanishing at infinity. The results on global existence of positive solutions always assume that the initial condition is in $C_0(\mathbb{R}^{N+2k})$, but it is used mostly to get prove regularity of the mild solutions. Weaker formulations for them would require only the initial conditions to be in Lebesgue spaces the most of our proofs can be used.

The manuscript is organized in the following way. We bring the main already known tools in Section \ref{pre}. In Section \ref{local}, we state and prove the results local well-posedenss result and the comparison principle. Finally, we give sufficient conditions for the global existence of positive solutions in Section \ref{global}.

\section{Preliminaries}\label{pre}

In this section we gather definitions and results used in our proofs. They are related to the heat kernel associated to the Grushin operator, the notion of solutions, tools from the Fractional Calculus and integrodifferential inequalities.

\subsection{Heat kernel}

We begin by recalling the \textit{explicit} form of the heat kernel associated with the Grushin operator (see, e.g., \cite[Proposition~2.5]{Oli-Vi-23} or \cite{Garofalo-Trallli-22}):
\begin{equation}\label{HK}
	K(x, x_0,y;t) = \frac{1}{(2\pi)^{\frac{N+2k}{2}}} \int_{\R^k} \left(\frac{|\xi|}{\sinh(|\xi| t)}\right)^{\frac{N}{2}} e^{i\xi  \cdot y- \frac{|\xi|}{2}\left((|x|^2 + |x_0|^2)  \coth(|\xi| t) -2x\cdot x_0 \mathrm{csch}(|\xi|t) \right)} \mathrm{d}\xi, 
\end{equation}
for $(x, x_0,y) \in \R^{2N+k}$, and  $t>0$. The kernel given above is smooth, i.e., $C^\infty(\R^{2N+k}\times(0,\infty))$.\\
Furthermore, the following results either are collected from \cite{Bi-Bra-23,Kogoj-Lima-Viana-24, Oli-Vi-23} or follow from them. We keep in mind that the Schwartz space $\mathcal{S}(\R^{N+k})$ is dense in $L^p(\mathbb{R}^{N+k})$ for $1\leq p< \infty$, and in $C_0(\mathbb{R}^{N+k})$. Here, $C_0(\R^{N+k})$ denotes the space of continuous functions vanishing at infinity, endowed with the supremum norm $\|\cdotp\|_\infty$.\\

\begin{proposition}\label{Heat}${}$
	If u denotes the solution of the heat equation with the Grushin operator,
	\begin{equation}\label{cauchy}
		\begin{array}{ll}
			\partial_{t}u-\Delta_{\mathcal{G}}u=0,&\qquad \mathrm{in}\  \R^{N+k}\times (0,\infty),
			\\{}\\ u(0)=u_0\in \mathcal{S}(\R^{N+k}),&\qquad \mathrm{in}\  \R^{N+k},
		\end{array}
	\end{equation}
	then we have
	\begin{equation}\label{solinear}
		u(x,y,t)=	\int_{\R^{N+k}} K(x,w,y-z;t) u_0(w,z)\,dw\,dz,\qquad (x,y)\in \R^{N+k},\,\,t>0,
	\end{equation} 
	where $(x, x_0,y;t)\in\R^{2N+k}\times(0,\infty)\mapsto K(x, x_0,y;t) $ denotes the heat kernel associated to $-\Delta_{\mathcal{G}}$. Moreover, $u\in C([0,\infty)\times\R^{N+k})\cap C^1((0,\infty),C^2(\R^{N+k}))$.
\end{proposition}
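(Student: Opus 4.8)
I would first identify the solution of \eqref{cauchy} with $e^{t\Delta_{\mathcal{G}}}u_0$, where $\{e^{t\Delta_{\mathcal{G}}}\}_{t\ge 0}$ is the heat semigroup generated by the (essentially self-adjoint, non-positive) Grushin operator on $L^2(\R^{N+k})$. Since $u_0\in\mathcal{S}(\R^{N+k})\subset\bigcap_{m\ge 0}D(\Delta_{\mathcal{G}}^m)$, the abstract theory gives that $t\mapsto e^{t\Delta_{\mathcal{G}}}u_0$ is smooth into $L^2$ and solves $\partial_t u=\Delta_{\mathcal{G}}u$ in the $L^2$ sense. Because $K$ in \eqref{HK} is precisely the integral kernel of $e^{t\Delta_{\mathcal{G}}}$ (see \cite{Oli-Vi-23,Garofalo-Trallli-22}), this yields \eqref{solinear} as an identity in $L^2$; and since the right-hand side of \eqref{solinear} is an absolutely convergent integral for every $(x,y)$ and $t>0$ — the Schwartz decay of $u_0$ against the locally-in-$t$ bounds on $K$ — it holds pointwise.

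\textbf{Step 2 (smoothness for $t>0$).} I would then differentiate \eqref{solinear} under the integral sign. The kernel $K$ is $C^\infty$ on $\R^{2N+k}\times(0,\infty)$, and, using the explicit formula \eqref{HK} together with the asymptotics of $\sinh,\coth,\mathrm{csch}$, all of its $(x,y,t)$-derivatives of any order are dominated, locally uniformly for $t$ in compact subsets of $(0,\infty)$, by functions of the integration variable $w$ of at most polynomial growth; pairing these against the rapidly decreasing $u_0$ makes dominated convergence applicable. This gives $u\in C^\infty(\R^{N+k}\times(0,\infty))$, with the derivatives locally bounded in $t$; in particular $u(\cdot,t)\in C^2(\R^{N+k})$ for each $t>0$ and $t\mapsto u(\cdot,t)$ is $C^1$ into $C^2(\R^{N+k})$. (Alternatively one can avoid differentiating \eqref{HK} and invoke the hypoellipticity of $\partial_t-\Delta_{\mathcal{G}}$, which holds since the generating vector fields $\partial_{x_i}$ and $x_i\partial_{y_l}$ satisfy Hörmander's bracket condition, $[\partial_{x_i},x_i\partial_{y_l}]=\partial_{y_l}$.) Moving the derivatives onto $K$ and using $(\partial_t-\Delta_{\mathcal{G}})K(\cdot,w,\cdot\,;t)=0$ shows $\partial_t u-\Delta_{\mathcal{G}}u=0$ on $\R^{N+k}\times(0,\infty)$.

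\textbf{Step 3 (continuity down to $t=0$).} It remains to check $u(x,y,t)\to u_0(x,y)$ as $t\to 0^+$, uniformly in $(x,y)$. The family $w,z\mapsto K(x,w,y-z;t)$ is an approximate identity: it is non-negative, $\int_{\R^{N+k}}K(x,w,y-z;t)\,dw\,dz=1$ for all $t>0$ and $(x,y)$ (equivalently $e^{t\Delta_{\mathcal{G}}}1=1$, i.e. the semigroup is conservative), and for each $\delta>0$ the mass of $K(x,\cdot,y-\cdot\,;t)$ outside the Euclidean ball $B_\delta\big((x,y)\big)$ tends to $0$ as $t\to 0^+$, by the Gaussian-type off-diagonal bound on $K$ (in the anisotropic Grushin distance, hence also Euclideanly). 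Since $u_0\in\mathcal{S}$ is bounded and uniformly continuous, the standard approximate-identity estimate gives $\sup_{(x,y)}\lvert u(x,y,t)-u_0(x,y)\rvert\to 0$, and together with Step 2 this yields $u\in C([0,\infty)\times\R^{N+k})$.

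\textbf{Main obstacle.} The only genuinely delicate point is quantitative control of $K$ and its derivatives — both to license differentiation under the integral in Step 2 and to obtain the off-diagonal decay in Step 3 — starting from the oscillatory $\xi$-integral in \eqref{HK}. The cleanest route is to bypass that oscillatory integral and instead invoke the Gaussian/sub-Gaussian bounds on $K$ and on $\partial_t K,\nabla K,D^2K$ that are already recorded in \cite{Oli-Vi-23,Kogoj-Lima-Viana-24,Garofalo-Trallli-22}; once those are in hand, the proof collapses to the standard semigroup-plus-approximate-identity template, with the Grushin-specific input confined to the kernel estimates.
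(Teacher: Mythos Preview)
The paper does not prove this proposition at all: it is explicitly listed among the ``results [that] either are collected from \cite{Bi-Bra-23,Kogoj-Lima-Viana-24, Oli-Vi-23} or follow from them,'' and no argument is given in the text. Your proposal therefore cannot be compared to a proof in the paper --- you are supplying what the cited references presumably contain.

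That said, your sketch is sound and follows the standard template one would expect in those references: the representation \eqref{solinear} is the definition of the semigroup action via its kernel; smoothness for $t>0$ comes either from differentiating under the integral (justified by kernel bounds) or from H\"ormander hypoellipticity; and continuity at $t=0$ is the approximate-identity property. Your identification of the ``main obstacle'' is accurate --- extracting pointwise and derivative bounds on $K$ directly from the oscillatory integral \eqref{HK} is genuinely technical --- and your suggested workaround (import the Gaussian-type estimates from \cite{Oli-Vi-23,Garofalo-Trallli-22}) is exactly how the paper itself proceeds, by citation rather than by rederivation. One small point: in Step~3 you invoke $\int K\,dw\,dz=1$, which the paper records separately as Proposition~\ref{properties}(v), so strictly speaking your argument also leans on that companion result; this is harmless since both propositions are presented as a package imported from the same sources.
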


\begin{proposition}\label{properties}
	There is a unique semigroup $(S_{\mathcal{G}}(t))_{t\geq0}$ associated to $\Delta_{\mathcal{G}}$ and satisfying the following properties:
	\begin{itemize}
		\item[$(i)$]  For all $\varphi\in L^p(\R^{N+k})$, $1\leq p\leq \infty$, we have
		$$S_{\mathcal{G}}(t)\varphi(x,y)=\int_{\R^{N+k}} K(x,w,y-z;t) \varphi(w,z)\,dw\,dz,\qquad (x,y)\in \R^{N+k},\,\,t>0.$$
		\item[$(ii)$] $(S_\mathcal{G}(t))_{t\geq0}$ is a contraction semigroup on $L^p(\mathbb{R}^{N+k})$, $1\leq p\leq \infty$, which is strongly continuous for $1\leq p<\infty$. 
		\item[$(iii)$] $(S_\mathcal{G}(t))_{t\geq0}$  is a strongly continuous semigroup on $C_0(\mathbb{R}^{N+k})$.
		\item[$(iv)$] For every $v\in X$, where $X$ is either $L^p(\mathbb{R}^{N+k})$ for $1\leq p< \infty$ or $C_0(\mathbb{R}^{N+k})$, the map $t\longmapsto S_\mathcal{G}(t)v$ is continuous from $[0,\infty)$ into $X$.
		\item[$(v)$] $\displaystyle\int_{\mathbb{R}^{N+k}}K(x,0,y;t)  \,dx\,dy=1$, for all $t>0$.
		\item[$(vi)$] The heat kernel satisfies
		$$K(x,x_0,y;t)>0,\qquad \textrm{for all}\,\, (x, x_0,y;t)\in\R^{2N+k}\times(0,\infty).$$
		Hence, $S_\mathcal{G}(t)\varphi\geq 0$ whenever $\varphi\geq0$, for all $t\geq0$.
		\item[$(vii)$] $K(rx,rx_0,r^2y;r^2t)=r^{-(N+2k)}K(x,x_0,y;t)$, for all $(x, x_0,y;t)\in\R^{2N+k}\times(0,\infty)$, $r>0$.
		\item[$(viii)$]  For all $(x, x_0,y;t)\in\R^{2N+k}\times(0,\infty)$, we have
		$$K(x,x_0,-y;t)=K(x,x_0,y;t)\quad\mathrm{and}\quad K(x,x_0,y;t)=K(x_0,x,y;t).$$
	\end{itemize}
\end{proposition}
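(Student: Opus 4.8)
The plan is to construct the family $(S_{\mathcal{G}}(t))_{t\ge0}$ directly from the explicit kernel \eqref{HK} --- setting $S_{\mathcal{G}}(0)=\mathrm{Id}$ and letting $S_{\mathcal{G}}(t)\varphi$ be the integral in $(i)$ for $t>0$ --- and then to establish $(i)$--$(viii)$ as follows: first the algebraic and scaling identities $(v)$, $(vii)$, $(viii)$ are read off from \eqref{HK} by elementary manipulations; these feed into the contraction and positivity properties $(ii)$, $(vi)$; and $(i)$, $(iii)$, $(iv)$ together with strong continuity are deduced from Proposition \ref{Heat} and the density of $\mathcal{S}(\R^{N+k})$ in each of the spaces $X$.

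I would begin with the identities that are pure computations on \eqref{HK}. For $(viii)$, the substitution $\xi\mapsto-\xi$ leaves the integrand invariant except that $e^{i\xi\cdot y}$ becomes $e^{i\xi\cdot(-y)}$ (all of $|\xi|$, $\sinh(|\xi|t)$, $\coth(|\xi|t)$, $\mathrm{csch}(|\xi|t)$ are even in $\xi$), which gives $K(x,x_0,-y;t)=K(x,x_0,y;t)$; and the integrand is symmetric under $x\leftrightarrow x_0$ since only $|x|^2+|x_0|^2$ and $x\cdot x_0$ occur, which gives $K(x,x_0,y;t)=K(x_0,x,y;t)$. For $(vii)$, the change of variables $\eta=r^2\xi$ turns $K(rx,rx_0,r^2y;r^2t)$ into $r^{-(N+2k)}$ times the original integral, using $\mathrm{d}\xi=r^{-2k}\mathrm{d}\eta$, $(|\xi|/\sinh(|\xi|r^2t))^{N/2}=r^{-N}(|\eta|/\sinh(|\eta|t))^{N/2}$, and the invariance of the exponent. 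For $(v)$ I would perform the Gaussian integral in $x$ (with $x_0=0$), use $\sinh(|\xi|t)\coth(|\xi|t)=\cosh(|\xi|t)$ to reduce $\int_{\R^N}K(x,0,y;t)\,\mathrm{d}x$ to $(2\pi)^{-k}\int_{\R^k}(\cosh(|\xi|t))^{-N/2}e^{i\xi\cdot y}\,\mathrm{d}\xi$, and then integrate in $y$ and identify the outcome with the value at the origin of the Fourier transform of the Schwartz function $\xi\mapsto(\cosh(|\xi|t))^{-N/2}$, namely $1$. The same computation with an arbitrary second argument yields $\int_{\R^{N+k}}K(a,b,y;t)\,\mathrm{d}a\,\mathrm{d}y=1$ for every $b$, which I will use for $(ii)$.

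Real-valuedness of $K$ again follows from the $\xi\mapsto-\xi$ symmetry; for the strict positivity in $(vi)$ I would cite \cite{Oli-Vi-23,Garofalo-Trallli-22} (it also follows from the Beurling--Deny criterion for the local Dirichlet form of $\Delta_{\mathcal{G}}$), whence $S_{\mathcal{G}}(t)\varphi\ge0$ for $\varphi\ge0$ is immediate. With $(vi)$ available, $(ii)$ is routine: by Tonelli and the symmetry $(viii)$, both the $L^1$ and $L^{\infty}$ contractions reduce to the mass identity $\int_{\R^{N+k}}K(a,b,y;t)\,\mathrm{d}a\,\mathrm{d}y=1$, and the intermediate exponents follow by interpolation; the semigroup law $S_{\mathcal{G}}(t+s)=S_{\mathcal{G}}(t)S_{\mathcal{G}}(s)$ is the Chapman--Kolmogorov identity for $K$, which one may check from \eqref{HK} or obtain from uniqueness for \eqref{cauchy} on the dense class $\mathcal{S}(\R^{N+k})$; and that this semigroup is the one ``associated to $\Delta_{\mathcal{G}}$'' is witnessed on $\mathcal{S}(\R^{N+k})$ by Proposition \ref{Heat}, which also yields its uniqueness.

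Finally, $(i)$, $(iii)$, $(iv)$ and strong continuity I would obtain by density. On $\mathcal{S}(\R^{N+k})$ the representation formula in $(i)$ is exactly \eqref{solinear}, and Proposition \ref{Heat} gives $S_{\mathcal{G}}(\cdot)\varphi\in C([0,\infty)\times\R^{N+k})$ with $S_{\mathcal{G}}(t)\varphi\to\varphi$ pointwise as $t\to0^+$; combined with the contraction $(ii)$ and the concentration of $K$ near the diagonal as $t\to0^+$ (a consequence of the Gaussian-type upper bounds for the Grushin kernel in \cite{Bi-Bra-23,Oli-Vi-23}), this upgrades to $\|S_{\mathcal{G}}(t)\varphi-\varphi\|_X\to0$, after which the uniform bound $(ii)$ extends $(i)$ and strong continuity from $\mathcal{S}(\R^{N+k})$ to all of $X$; the inclusion $S_{\mathcal{G}}(t)C_0(\R^{N+k})\subset C_0(\R^{N+k})$ uses the smoothness of $K$ and the same kernel bounds via dominated convergence. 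Uniqueness of the semigroup follows since any semigroup satisfying $(i)$ coincides with this one on $\mathcal{S}(\R^{N+k})$ and is bounded. I expect the main obstacle to be precisely this strong-continuity/Feller part: since $K$ is not translation invariant in the $x$-variable, one cannot argue by an elementary convolution-kernel approximate identity as for the classical heat semigroup, and the argument genuinely rests on off-diagonal Gaussian estimates for $K$; strict positivity $(vi)$ is the other point I would simply quote from the literature rather than reprove.
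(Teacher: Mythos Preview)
Your proposal is essentially correct, and in fact considerably more detailed than what the paper does. In the paper, Proposition~\ref{properties} carries no proof at all: it is presented as a collection of known facts, prefaced by the sentence ``the following results either are collected from \cite{Bi-Bra-23,Kogoj-Lima-Viana-24,Oli-Vi-23} or follow from them.'' So there is no argument in the paper to compare your proposal against.

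That said, your direct computations from the explicit kernel \eqref{HK} for $(v)$, $(vii)$, $(viii)$ are accurate (the scaling in $(vii)$ and the Gaussian-then-Fourier reduction in $(v)$ check out), and your plan to deduce the contraction and strong-continuity statements from these identities together with density of $\mathcal{S}(\R^{N+k})$ and the Gaussian-type bounds from \cite{Bi-Bra-23,Oli-Vi-23} is the standard and correct route. You are also right to flag strict positivity $(vi)$ and the Feller/strong-continuity part as the points one would quote rather than redo from scratch; this is exactly what the paper does by deferring to the cited references. In short: the paper treats the proposition as background material imported from the literature, while you have sketched how one would actually verify each item --- your approach is not different, just more explicit.
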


\begin{lemma}[$L^p-L^q$ estimate]\label{Lp-Lqestimate} 
	Suppose $1\leq p\leq q\leq \infty$. Then there exists a positive constant $C$ such that for every $f\in L^p(\mathbb{R}^{N+k})$, the following inequality holds:
	\begin{equation}\label{semigroup}
		\|S_{\mathcal{G}}(t)f\|_q\leq C\,t^{-\frac{N+2k}{2}(\frac{1}{p}-\frac{1}{q})}\|f\|_p,\qquad t>0.
	\end{equation}
\end{lemma}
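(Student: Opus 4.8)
The plan is to reduce everything to the single endpoint estimate $L^1\to L^\infty$, which follows from a pointwise bound on the heat kernel, and then to recover all admissible pairs $1\le p\le q\le\infty$ by Riesz--Thorin interpolation together with the $L^r$-contractivity from part $(ii)$ of Proposition~\ref{properties}. Note that $S_{\mathcal{G}}(t)$ is a convolution only in the variable $y$, so Young's inequality for convolutions is not directly available; interpolation is the clean substitute.

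The key step is to show that
\[
\sup_{(x,x_0,y)\in\R^{2N+k}} K(x,x_0,y;t)\ \le\ C\,t^{-\frac{N+2k}{2}},\qquad t>0 .
\]
I would read this off directly from the representation \eqref{HK}. Since $2x\cdot x_0\le |x|^2+|x_0|^2$ and $\coth\theta-\mathrm{csch}\theta=\tanh(\theta/2)\ge 0$ for $\theta>0$, the real part of the exponent in \eqref{HK} satisfies
\[
-\tfrac{|\xi|}{2}\Big[(|x|^2+|x_0|^2)\coth(|\xi|t)-2x\cdot x_0\,\mathrm{csch}(|\xi|t)\Big]\ \le\ -\tfrac{|\xi|}{2}\big(|x|^2+|x_0|^2\big)\big(\coth(|\xi|t)-\mathrm{csch}(|\xi|t)\big)\ \le\ 0,
\]
so the exponential factor has modulus at most $1$. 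Hence
\[
K(x,x_0,y;t)\ \le\ \frac{1}{(2\pi)^{\frac{N+2k}{2}}}\int_{\R^k}\Big(\frac{|\xi|}{\sinh(|\xi|t)}\Big)^{\frac N2}\mathrm{d}\xi\ =\ \frac{t^{-\frac{N+2k}{2}}}{(2\pi)^{\frac{N+2k}{2}}}\int_{\R^k}\Big(\frac{|\eta|}{\sinh|\eta|}\Big)^{\frac N2}\mathrm{d}\eta,
\]
after the rescaling $\eta=t\xi$ and using $k+\tfrac N2=\tfrac{N+2k}{2}$; the remaining integral is finite since $|\eta|/\sinh|\eta|\to1$ as $\eta\to0$ and decays exponentially as $|\eta|\to\infty$. (Equivalently, one may invoke the homogeneity relation in part $(vii)$ of Proposition~\ref{properties} to reduce to $t=1$.) Combining this with part $(i)$ of Proposition~\ref{properties}, for $f\in L^1(\R^{N+k})$ one gets
\[
|S_{\mathcal{G}}(t)f(x,y)|\ \le\ \Big(\sup K(\cdot,\cdot,\cdot;t)\Big)\,\|f\|_1\ \le\ C\,t^{-\frac{N+2k}{2}}\|f\|_1,
\]
that is, \eqref{semigroup} in the case $(p,q)=(1,\infty)$.

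Finally I would interpolate twice. Riesz--Thorin applied to $S_{\mathcal{G}}(t)\colon L^1\to L^\infty$ (just proved) and $S_{\mathcal{G}}(t)\colon L^1\to L^1$ (contraction) yields $\|S_{\mathcal{G}}(t)f\|_q\le C\,t^{-\frac{N+2k}{2}(1-\frac1q)}\|f\|_1$ for every $1\le q\le\infty$; a second application to this bound and to $S_{\mathcal{G}}(t)\colon L^q\to L^q$ — parametrized by $\frac1p=(1-\theta)+\frac\theta q$, $\theta\in[0,1]$, so that the target exponent stays equal to $q$ and $(1-\theta)(1-\frac1q)=\frac1p-\frac1q$ — gives \eqref{semigroup} for all $1\le p\le q\le\infty$. (Consistency of $S_{\mathcal{G}}(t)$ on the various $L^r$, needed for interpolation, is immediate from the common formula in Proposition~\ref{properties}$(i)$, $\mathcal{S}(\R^{N+k})$ being dense in each $L^r$.) I expect the only real obstacle to be the first displayed bound: one must check that the real part of the exponent in \eqref{HK} is nonpositive for all $x,x_0$ and that the $\xi$-integral converges and scales correctly in $t$; the $L^1\to L^\infty$ estimate and the two interpolation steps are then routine.
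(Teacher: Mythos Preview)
The paper does not actually supply a proof of this lemma: it is listed among the results ``collected from \cite{Bi-Bra-23,Kogoj-Lima-Viana-24, Oli-Vi-23} or follow[ing] from them'' and is stated without argument. Your proposal therefore cannot be compared to a proof in the paper itself, but it stands on its own and is correct. The pointwise kernel bound is the right idea: your check that the real part of the exponent in \eqref{HK} is nonpositive is valid (the inequality $2x\cdot x_0\le |x|^2+|x_0|^2$ combined with $\coth\theta-\mathrm{csch}\,\theta=\tanh(\theta/2)\ge0$ does the job for all $x,x_0$), and the scaling $\eta=t\xi$ produces exactly the factor $t^{-(N+2k)/2}$ with a finite constant since $(|\eta|/\sinh|\eta|)^{N/2}$ is bounded near the origin and decays exponentially at infinity. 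The two-step Riesz--Thorin interpolation from the $L^1\to L^\infty$ endpoint and the $L^r$-contractivity in Proposition~\ref{properties}$(ii)$ is standard and correctly executed; your computation $(1-\theta)(1-\tfrac1q)=\tfrac1p-\tfrac1q$ is the right bookkeeping. In short, you have written out a clean self-contained proof where the paper simply cites the literature.
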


We need the following lemma.
\begin{lemma}\label{lemmaSg}
	Let $1\leq q<r<\infty$, and $\alpha = \frac{N+2k}{2}\left(\frac{1}{q} - \frac{1}{r}\right)$. Then
	\begin{equation}
		\lim\limits_{t\rightarrow 0^+}t^{\alpha}\|S_\mathcal{G}(t)\varphi\|_{L^r}=0,
	\end{equation}
	for any $\varphi\in L^q(\R^{N+k})$.
\end{lemma}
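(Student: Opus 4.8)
The plan is to run a standard density argument, built on two ingredients that are already available: the uniform bound furnished by the $L^p$–$L^q$ estimate (Lemma \ref{Lp-Lqestimate}), and the fact that $(S_\mathcal{G}(t))_{t\ge0}$ is a contraction on every $L^r(\R^{N+k})$ (Proposition \ref{properties}$(ii)$). The key preliminary observation is that $\alpha>0$, since $q<r$.

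First I would settle the claim on the dense subspace $\mathcal{S}(\R^{N+k})$ (or, just as well, on $L^q(\R^{N+k})\cap L^r(\R^{N+k})$, which is dense in $L^q$ because $1\le q<r<\infty$). For $\psi$ in this subspace, contractivity on $L^r$ gives $\|S_\mathcal{G}(t)\psi\|_{L^r}\le\|\psi\|_{L^r}$, whence $t^\alpha\|S_\mathcal{G}(t)\psi\|_{L^r}\le t^\alpha\|\psi\|_{L^r}\to0$ as $t\to0^+$. Then, for arbitrary $\varphi\in L^q(\R^{N+k})$ and $\e>0$, choose such a $\psi$ with $\|\varphi-\psi\|_{L^q}<\e$, split $\varphi=(\varphi-\psi)+\psi$, apply Lemma \ref{Lp-Lqestimate} to the first term and the previous step to the second:
\begin{align*}
	t^\alpha\|S_\mathcal{G}(t)\varphi\|_{L^r} &\le t^\alpha\|S_\mathcal{G}(t)(\varphi-\psi)\|_{L^r}+t^\alpha\|S_\mathcal{G}(t)\psi\|_{L^r}\\
	&\le C\,\|\varphi-\psi\|_{L^q}+t^\alpha\|\psi\|_{L^r}\le C\e+t^\alpha\|\psi\|_{L^r}.
\end{align*}
Taking $\limsup_{t\to0^+}$ eliminates the last term and yields $\limsup_{t\to0^+}t^\alpha\|S_\mathcal{G}(t)\varphi\|_{L^r}\le C\e$; letting $\e\to0$ gives the conclusion.

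There is no genuine obstacle here—the substance of the lemma is entirely contained in Lemma \ref{Lp-Lqestimate} and in the contraction property. The only point requiring a little care is the choice of approximating class: one must ensure the approximants lie in $L^r$ as well as being $L^q$-close to $\varphi$, which is why I work with $\mathcal{S}(\R^{N+k})$ (or $L^q\cap L^r$) rather than, say, $L^q\cap L^\infty$, the latter not being contained in $L^r$ when $r<\infty$.
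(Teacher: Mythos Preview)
Your proof is correct and follows essentially the same density argument as the paper's: approximate in $L^q$ by functions that also lie in $L^r$ (the paper uses $C_c^\infty$, you use $\mathcal{S}$ or $L^q\cap L^r$), apply the $L^q$--$L^r$ smoothing estimate to the remainder, and use boundedness on $L^r$ together with $\alpha>0$ to kill the good term. One small slip in your closing remark: in fact $L^q\cap L^\infty\subset L^r$ for $q<r<\infty$ by interpolation ($\|f\|_{L^r}^r\le\|f\|_{L^\infty}^{r-q}\|f\|_{L^q}^q$), so that class would have worked too---but this does not affect the argument.
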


\begin{proof} Given $\varphi \in L^q(\R^{N+k})$. Then, by density argument, there exists a sequence $(\varphi_n)_{n \in \N}\subset C_c^\infty(\R^{N+k})$ converging to $\varphi$ in $L^q(\R^{N+k})$. Using the semigroup decay \eqref{semigroup}, we have
	\begin{eqnarray*}
		t^\alpha\|S_\mathcal{G}(t)\varphi\|_{L^r}&\leq& t^\alpha\|S_\mathcal{G}(t)(\varphi-\varphi_n)\|_{L^r}+t^\alpha\|S_\mathcal{G}(t)\varphi_n\|_{L^r}\\
		&\leq&C\|\varphi-\varphi_n\|_{L^q}+t^\alpha\|\varphi_n\|_{L^r}\\
	\end{eqnarray*}
	for all $n\in \N, t>0$.
	Taking the limit as $t \rightarrow 0$ , we get
	\begin{equation*}
		\lim\limits_{t\rightarrow 0^+}t^{\alpha}\|S_\mathcal{G}(t)\varphi\|_{L^r}\leq C\|\varphi-\varphi_n\|_{L^q},
	\end{equation*}
	and the result follows by letting $n \rightarrow \infty$.
\end{proof}

\section{Local Existence and the Comparison Principle}\label{local}

In this section, we establish the local well-posedness of mild solutions to problem \eqref{*}, addressing both singular and regular initial data. Specifically, we prove the local existence and provide a blow-up alternative for mild solutions corresponding to initial data either $u_0 \in L^q(\R^{N+k})$ or $u_0\in C_0(\mathbb{R}^{N+k})$.

We begin by introducing the notion of a mild solution to problem \eqref{*} .
\begin{definition} Let $X$ denote either $L^q(\mathbb{R}^{N+k})$, with $1\leq q< \infty$, or $C_0(\mathbb{R}^{N+k})$. A function
	$$u\in L^\infty([0,T],X)$$ 
	is called an {\bf $X$-mild solution} of $\eqref{*}$ if it satisfies the integral equation
	\begin{equation}\label{branda}
		u(t)=S_{\mathcal{G}}(t)u_0 + k_1 \int_{0}^{t}S_{\mathcal{G}}(t-s) \int_0^s(s-\tau)^{-\gamma}\abs{u(\tau)}^{p_1-1}u(\tau) \, \mathrm{d}\tau \,\mathrm{d}s + k_2\int_{0}^{t}S_{\mathcal{G}}(t-s) |u(s)|^{p_2-1}u(s)\mathrm{d}s.
	\end{equation}
\end{definition}


\subsection{Local existence with singular initial condition}
Let $T>0$ be fixed. We define the Banach space
$$E=L^\infty((0,T);L^q(\R^{N+k}))\cap L^\infty_{loc}((0,T), L^{r_1}(\R^{N+k}))\cap L^\infty_{loc}((0,T), L^{r_2}(\R^{N+k})),$$
where $r_i=p_i q$ for $i=1,2$.  Based on this, we introduce the Banach space 
\begin{equation*}
	X_T=\{u \in E:\,\|u\|_{X_T}<+\infty\},
\end{equation*}
which will serve as the framework for our fixed point argument. The norm on $X_T$ is defined by
$$\|u\|_{X_T}= \sup_{t \in (0,T)}\|u(t)\|_{L^{q}}+\max_{i=1,2} \sup_{t \in (0,T)}t^{\alpha_i}\|u(t)\|_{L^{r_i}},$$
where the exponents $\alpha_i$ are given by 
$$\alpha_i=\frac{N+2k}{2p_i q}\left(p_i-1\right),\qquad i=1,2.$$
\begin{theorem}[Local existence with singular initial condition]\label{teoexun}
	Let $p_i>1$ for $i=1,2$, and let $u_0 \in L^q(\R^{N+k})$ with $1\leq q <\infty$, satisfying
	$$q>\max\limits_{i=1,2}\frac{N+2k}{2}(p_i-1).$$
	Then there exists a time $T=T(u_0)>0$ such that problem \eqref{*} possesses an $L^q$-mild solution $u$, which is unique in $X_T$. Moreover, the following properties hold:
	\begin{itemize}
		\item[$\mathrm{(i)}$] Smoothing effect and continuous dependence,
		\begin{equation}\label{contdep}
			\sup_{t \in [0,T]} \|u(t)-v(t)\|_{L^q}+\max_{i=1,2} \sup_{t \in (0,T)}t^{\alpha_i}\|u(t)-v(t)\|_{L^{r_i}} \leq C\|u_0-v_0\|_{L^q} ,
		\end{equation}
		whenever $u$ and $v$ are $L^q$-mild solutions of $\eqref{*}$ with time of existence $T=\min\{T(u_0),T(v_0)\}$ and initial data $u_0$ and $v_0$, respectively.
		\item[$\mathrm{(ii)}$]  Decay of solution,
		\begin{equation}\label{conv}
			\lim_{t\rightarrow0^+} t^{\alpha_i}\|u(t)\|_{L^{r_i}} =0,\quad\hbox{for all}\,\,i=1,2.
		\end{equation}
		\item[$\mathrm{(iii)}$] If in addition $q\geq \max\limits_{i=1,2}  p_i$, then the mild solution  $u$ is also unique in $C([0,T];L^q(\R^{N+k}))$. In this case, problem \eqref{*} possesses a maximal mild solution $u\in C([0,T_{\max});L^q(\R^{N+k}))$ with $u(0)=u_0$, where $T_{\max}=T_{\max}(u_0)\leq\infty$. Furthermore, either $T_{\max}=\infty$, or else $T_{\max}<\infty$ and
		\begin{equation}\label{limblowup}
			\lim_{t\rightarrow T_{\max}^-}\|u\|_{L^\infty((0,t),L^q(\mathbb{R}^{N+k}))}=+\infty.
		\end{equation}
		\item[$\mathrm{(iv)}$] If $u_0\geq0$ and $k_1,k_2\geq0$, then then the mild solution $u\geq0$ remains nonnegative for all $t\in[0,T]$.
	\end{itemize}
\end{theorem}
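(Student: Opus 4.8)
The plan is to set up a standard Banach fixed-point argument in the space $X_T$, for $T$ small. Define the map
$$\Phi(u)(t) = S_{\mathcal{G}}(t)u_0 + k_1\int_0^t S_{\mathcal{G}}(t-s)\int_0^s (s-\tau)^{-\gamma}|u(\tau)|^{p_1-1}u(\tau)\,\mathrm{d}\tau\,\mathrm{d}s + k_2\int_0^t S_{\mathcal{G}}(t-s)|u(s)|^{p_2-1}u(s)\,\mathrm{d}s.$$
First I would record the elementary pointwise inequality $\big||a|^{p-1}a - |b|^{p-1}b\big| \le p\,(|a|^{p-1}+|b|^{p-1})|a-b|$, so that estimates on $\Phi$ and $\Phi(u)-\Phi(v)$ run in parallel. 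The linear term is handled by Lemma \ref{Lp-Lqestimate}: $S_{\mathcal{G}}(t)u_0$ is bounded in $L^q$ and, using the hypothesis $q > \frac{N+2k}{2}(p_i-1)$ (equivalently $\alpha_i < 1$ and $q < r_i$), the quantity $t^{\alpha_i}\|S_{\mathcal{G}}(t)u_0\|_{L^{r_i}}$ is bounded, with limit $0$ as $t\to 0^+$ by Lemma \ref{lemmaSg} — this already gives the seed of part (ii).

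For the nonlinear terms I would estimate in $L^q$ and in $L^{r_i}$ separately. For the memory term, in $L^q$: bound $\|S_{\mathcal{G}}(t-s)\,|u(\tau)|^{p_1-1}u(\tau)\|_{L^q} \le C(t-s)^{-\frac{N+2k}{2}(1/q' - 1/q)}\||u(\tau)|^{p_1}\|_{L^{q'}}$ where one chooses the intermediate exponent so that $\||u(\tau)|^{p_1}\|_{\cdot} = \|u(\tau)\|_{L^{r_1}}^{p_1} \le \tau^{-p_1\alpha_1}\|u\|_{X_T}^{p_1}$; then the $\tau$- and $s$-integrals are products of Beta-type integrals $\int_0^s(s-\tau)^{-\gamma}\tau^{-p_1\alpha_1}\,\mathrm{d}\tau$ and $\int_0^t(t-s)^{-\theta}s^{1-\gamma-p_1\alpha_1}\,\mathrm{d}s$, which converge precisely because the exponents are $<1$ (this is where the hypothesis on $q$, and implicitly $\gamma<1$, is used) and produce a positive power of $T$ (or $t$) in front. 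The same scheme in $L^{r_i}$ gives the bound $t^{\alpha_i}\|\cdots\|_{L^{r_i}} \le C\,T^{\delta}\|u\|_{X_T}^{p_1}$ for some $\delta>0$. The term with $k_2$ is treated identically but without the inner fractional integral. Collecting, $\|\Phi(u)\|_{X_T} \le C\|u_0\|_{L^q} + C T^{\delta}(\|u\|_{X_T}^{p_1}+\|u\|_{X_T}^{p_2})$ and similarly for the Lipschitz estimate, so for $T$ small $\Phi$ maps a suitable closed ball of $X_T$ into itself and is a contraction there; Banach's theorem yields the unique fixed point $u\in X_T$. I expect the bookkeeping of which intermediate Lebesgue exponent to insert into the $L^p$–$L^q$ estimate — and verifying that all resulting Beta integrals genuinely converge and leave a positive power of $T$ — to be the main technical obstacle.

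For the remaining items: part (i) follows by running the same contraction estimates on $\Phi(u)-\Phi(v)$ and absorbing, on a common interval $[0,T]$, the nonlinear contributions into the left side, leaving $\|u-v\|_{X_T}\le C\|u_0-v_0\|_{L^q}$; the $L^q$-continuity up to $t=0$ comes from Proposition \ref{properties}(iv) for the linear part plus the fact that the nonlinear Duhamel terms carry a positive power of $t$ and hence vanish as $t\to0^+$. Part (ii) is exactly Lemma \ref{lemmaSg} applied to $S_{\mathcal{G}}(t)u_0$ together with the $T^{\delta}$-smallness of the Duhamel terms, passing to the limit $t\to0^+$.

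For part (iii), when $q\ge\max_i p_i$ one has $r_i = p_i q$ with $q\le r_i$ and the nonlinearity maps $L^q$-functions into $L^{q/p_i}\subset$ (after the semigroup) $L^q$ continuously in $t$; I would show directly that a solution in $C([0,T];L^q)$ must coincide with the $X_T$-solution by a Gronwall-type argument (using that $C([0,T];L^q)\hookrightarrow X_T$ when the $\alpha_i$-weights are harmless), then define $T_{\max}$ as the supremum of existence times, and prove the blow-up alternative by the usual contradiction: if $T_{\max}<\infty$ but $\|u\|_{L^\infty((0,t),L^q)}$ stayed bounded as $t\to T_{\max}^-$, the local existence time depends only on that bound, so the solution could be continued past $T_{\max}$. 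Finally, part (iv): when $u_0\ge0$ and $k_1,k_2\ge0$, the map $\Phi$ preserves the cone of nonnegative functions — $S_{\mathcal{G}}(t)$ is positivity-preserving by Proposition \ref{properties}(vi), the integrands $|u|^{p_i-1}u$ have the sign of $u$, and $(s-\tau)^{-\gamma}>0$ — so iterating $\Phi$ from $u\equiv0$ (or intersecting the fixed-point ball with the closed cone, which is closed in $X_T$) forces $u\ge0$.
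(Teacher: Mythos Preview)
Your overall strategy---contraction in $X_T$ via the $L^p$--$L^q$ smoothing and Beta-integral bookkeeping, then deducing (i)--(iv)---is exactly the paper's approach, and the estimates you sketch are the right ones. Two places need correction, however.

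For part (iii), the parenthetical ``$C([0,T];L^q)\hookrightarrow X_T$'' is false: membership in $X_T$ requires $u(t)\in L^{r_i}$ with $r_i=p_iq>q$, and a generic $C([0,T];L^q)$ function need not lie in any higher Lebesgue space. The extra hypothesis $q\ge\max_i p_i$ is there precisely so one can \emph{avoid} $X_T$ altogether: it makes $q/p_i\ge1$, so the $L^{q/p_i}\to L^q$ smoothing estimate is available, and together with H\"older ($\|(|u|^{p_i-1}+|v|^{p_i-1})|u-v|\|_{L^{q/p_i}}\le(\|u\|_{L^q}^{p_i-1}+\|v\|_{L^q}^{p_i-1})\|u-v\|_{L^q}$) one runs a Gronwall argument entirely in $L^q$. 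That is what the paper does.

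For the blow-up alternative, the continuation argument you describe works for Markovian equations but not as stated here, because the memory term couples the solution at time $t_m+\tau$ to its full history on $[0,t_m+\tau]$. The paper handles this by writing $u(t_m+\tau)$ via Duhamel and splitting the inner fractional integral into a ``frozen'' piece $\int_0^{t_m}(t_m+s-\sigma)^{-\gamma}|u|^{p_1-1}u(\sigma)\,d\sigma$ (bounded in terms of the assumed $L^\infty_t L^q_x$ bound) and a fresh piece on $[t_m,t_m+\tau]$; only then does the fixed-point argument yield a continuation time depending solely on $L+1$. You should include this splitting explicitly.
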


\begin{proof} The proof is divided into several steps.\\
	{\it Step 1. Fixed-point argument.} Let $T>0$ and choose $R\geq C \|u_0\|_{L^q}$ with $C>0$ is given in \eqref{semigroup}. In order to use the Banach fixed-point theorem, we introduce the following nonempty complete metric space
	\begin{equation}\label{norm}
		B_T(R)=\{u\in X_T:\,\,\|u\|_{X_T}\leq 4 R\},
	\end{equation}
	equipped with the distance
	$$d(u,v)=\max_{i=1,2} \sup_{t \in (0,T)}t^{\alpha_i}\|u(t)-v(t)\|_{L^{r_i}} .$$
	For $u\in B_{T}(R)$, we define $\Lambda(u)$ by
	\begin{equation}\label{solop}
		\Lambda(u)(t):=S_{\mathcal{G}}(t)u_0 +k_1 \int_{0}^{t}S_{\mathcal{G}}(t-s) \int_0^s(s-\tau)^{-\gamma}\abs{u(\tau)}^{p_1-1}u(\tau) \, \mathrm{d}\tau \,\mathrm{d}s + k_2\int_{0}^{t}S_{\mathcal{G}}(t-s) |u(s)|^{p_2-1}u(s) \mathrm{d}s.
	\end{equation}
	Let us prove that  $\Lambda: B_{T}(R) \rightarrow B_{T}(R)$. Using Lemma \ref{Lp-Lqestimate}, we obtain for any $u \in B_{T}(R)$, 
	\begin{align*}
		\|\Lambda(u)(t)\|_{L^{r_i}} & \leq \|S_{\mathcal{G}}(t)u_0\|_{L^{r_i}} + \left\| \int_{0}^{t}S_{\mathcal{G}}(t-s) \int_0^s(s-\tau)^{-\gamma} \abs{u(\tau)}^{p_1-1}u(\tau) \, \mathrm{d}\tau \,\mathrm{d}s \right\|_{L^{r_i}}  \\
		& \quad +\left\|\int_{0}^{t}S_{\mathcal{G}}(t-s) |u(s)|^{p_2-1}u(s) \mathrm{d}s\right\|_{L^{r_i}} \\[.1cm]
		&\leq C\|u_0\|_{L^q}t^{-\alpha_i} +C   \int_{0}^{t} (t-s)^{-\alpha_i} \int_0^s (s-\tau)^{-\gamma} \|u(\tau)\|^{p_1}_{L^{r_1}} \, \mathrm{d}\tau \,\mathrm{d}s  \\
		& \quad+C \int_{0}^{t}(t-s)^{-\alpha_i}  \|u(s)\|^{p_2}_{L^{r_2}} \mathrm{d}s \\
		&\leq C\|u_0\|_{L^q}t^{-\alpha_i} + C\left(\sup_{t \in (0,T)}t^{\alpha_1} \|u(t)\|_{L^{r_1}}\right)^{p_1}   \int_{0}^{t} (t-s)^{-\alpha_i} \int_0^s (s-\tau)^{-\gamma} \tau^{-\alpha_1p_1} \, \mathrm{d}\tau \,\mathrm{d}s  \\
		& \quad+ C\left(\sup_{t \in (0,T)}t^{\alpha_2} \|u(t)\|_{L^{r_2}}\right)^{p_2} \int_{0}^{t}(t-s)^{-\alpha_i} s^{-\alpha_2 p_2}\mathrm{d}s \\
		&\leq C\|u_0\|_{L^q}t^{-\alpha_i} + C \|u\|^{p_1}_{X_T}  t^{-\alpha_i-\alpha_1 p_1-\gamma+2} + C \|u\|^{p_2}_{X_T}  t^{-\alpha_i-\alpha_2 p_2+1} ,
	\end{align*}
	for all $t\in (0,T)$, $i=1,2$, where we have used the fact  that $\gamma<1$ and that assumption $q>\max\limits_{i=1,2}\frac{N+2k}{2}(p_i-1)$ implies $\alpha_i p_i<1$ and $\gamma < 2-\alpha_1 p_1$. It follows
	\begin{equation*}
		t^{\alpha_i} \|\Lambda(u)(t)\|_{L^{r_i}} \leq R + C\left( R^{p_1} t^{-\alpha_1 p_1-\gamma+2} +  R^{p_2} t^{-\alpha_2 p_2+1}  \right),
	\end{equation*}
	for all $t\in(0,T)$, and so 
	\begin{equation}\label{defined}
		\max_{i=1,2} \sup_{t \in (0,T)}t^{\alpha_i}\|\Lambda(u)(t)\|_{L^{r_i}} \leq R +C\left(  R^{p_1} T^{-\alpha_1 p_1-\gamma+2} +  R^{p_2} T^{-\alpha_2 p_2+1}  \right) \leq 2R, 
	\end{equation}
	for a sufficiently small $T>0$ (depending on $R$). Moreover, 
	\begin{align*}
		\|\Lambda(u)(t)\|_{L^{q}} & \leq  C\|u_0\|_{L^q} +C   \int_{0}^{t} \int_0^s (s-\tau)^{-\gamma} \|u(\tau)\|^{p_1}_{L^{r_1}} \, \mathrm{d}\tau \,\mathrm{d}s +C \int_{0}^{t}  \|u(s)\|^{p_2}_{L^{r_2}} \mathrm{d}s \\
		&\leq C\|u_0\|_{L^q}+ C \|u\|^{p_1}_{X_T}  t^{-\alpha_1 p_1-\gamma+2} + C \|u\|^{p_2}_{X_T}  t^{-\alpha_2 p_2+1} ,
	\end{align*}
	for all $t\in (0,T)$, and so 
	\begin{equation}\label{newdefined}
		\sup_{t \in (0,T)}\|\Lambda(u)(t)\|_{L^{q}} \leq R +C\left(  R^{p_1} T^{-\alpha_1 p_1-\gamma+2} +  R^{p_2} T^{-\alpha_2 p_2+1}  \right) \leq 2R.
	\end{equation}	
	By combining \eqref{defined}-\eqref{newdefined}, we conclude that $\|\Lambda(u)\|_{X_T}\leq 4 R$ i.e. $\Lambda(u) \in B_{T}(R)$.\\
	Similarly, one can see that $\Lambda$ is a contraction. For $u,v \in B_T(R)$, we have
	\begin{align*}
		&t^{\alpha_i}\|\Lambda(u)(t)-\Lambda(v)(t)\|_{L^{r_i}} \\
		& \leq  C t^{\alpha_i} R^{p_1-1} \int_{0}^{t} (t-s)^{-\alpha_i} \int_0^s (s-\tau)^{-\gamma} \tau^{-\alpha_1(p_1-1)} \|u(\tau)-v(\tau)\|_{L^{r_1}} \mathrm{d}\tau \mathrm{d}s \\
		& \quad+ C t^{\alpha_i} R^{p_2-1} \int_{0}^{t} (t-s)^{-\alpha_i}s^{-\alpha_2(p_2-1)} \|u(s)-v(s)\|_{L^{r_2}} \mathrm{d}s  \\
		&\leq  C \left( R^{p_1-1} t^{2-\gamma-\alpha_1p_1} + R^{p_2-1} t^{1-\alpha_2p_2} \right) \left(\max_{i=1,2} \sup_{t \in (0,T)}t^{\alpha_i} \|u(t) - v(t)\|_{L^{r_i}}\right) ,
	\end{align*}
	for all $t\in(0,T)$, $i=1,2$. From the choices for the parameter $p_i,q,\gamma$, a sufficiently small $T>0$, we have
	\begin{equation*}
		d(\Lambda(u),\Lambda(v)) \leq \frac{1}{2}d(u,v).
	\end{equation*}
	Consequently, by the contraction principle, $\Lambda$  has a unique fixed point $u$ in $B_T(R)$.\\

	\noindent {\it Step 2. Regularity.} One can easily check that $f\in L^1((0,T),L^q(\R^{N+k}))$, with
	$$f(t):=k_1 \int_0^t(t-s)^{-\gamma}\abs{u(s)}^{p_1-1}u(s) \, \mathrm{d}s  + k_2 |u(t)|^{p_2-1}u(t),\qquad \hbox{for all}\,\,t\in (0,T).$$
	Applying Lemma 4.1.5 in \cite{Caz-Hau-98}, using the continuity of the semigroup $S_{\mathcal{G}}(t)$,  we conclude that $u\in C([0,T],L^q(\R^{N+k}))$.\\
	
	\noindent {\it Step 3. Uniqueness in $X_T$.} For sufficiently small $\tilde{T}>0$, \eqref{conv} implies that $u,v\in B_{\tilde{T}}(R)$. Thus $u\equiv v$ on $[0,\tilde{T}]$. For $t\in[\tilde{T},T]$, perform as in the contraction part of $\Lambda$, we get
		\begin{align*}
			t^{\alpha_i}\|u(t)-v(t)\|_{L^{r_i}} 
			\leq & C t^{\alpha_i} \|u\|_{X_T}^{p_1-1} \int_{\tilde{T}}^{t} (t-s)^{-\alpha_i} s^{1-\gamma-\alpha_1 p_1} \left(s^{\alpha_1} \|u(s) - v(s)\|_{L^{r_1}}\right) \mathrm{d}s \\
			& + C t^{\alpha_i} \|u\|_{X_T}^{p_2-1} \int_{\tilde{T}}^{t} (t-s)^{-\alpha_i}s^{-\alpha_2 p_2} \left(s^{\alpha_2} \|u(s) - v(s)\|_{L^{r_2}}\right) \mathrm{d}s    .
		\end{align*}
		Define
		$$\xi(t) = t^{\alpha_1} \|u(t) - v(t)\|_{L^{r_1}} +t^{\alpha_2} \|u(t) - v(t)\|_{L^{r_2}},$$
		and
		$$M=C (T^{\alpha_1}+T^{\alpha_2}) \left(  \|u\|_{X_T}^{p_1-1} + \|u\|_{X_T}^{p_2-1}\right) \left(\tilde{T}^{-\alpha_2 p_2} + T^{1-\gamma}\tilde{T}^{-\alpha_1 p_1}\right) $$
		Then
		$$\xi(t) \leq M  \int_{0}^{t} ((t-s)^{-\alpha_1}+(t-s)^{-\alpha_2})  \xi(s) \mathrm{d}s .$$
		Then, the singular Gronwall inequality (see e.g \cite[Lemma~3.3]{Amann-84}) gives us $\xi(t)=0$, whence we proved the uniqueness in $X_T$.

	\noindent {\it Step 4. Uniqueness in $C([0,T];L^q(\R^{N+k}))$.} Let $u$ and $v$ be two $L^q$-mild solutions of \eqref{*} with $u(0)=v(0)=u_0$, and such that $q\geq \max\limits_{i=1,2}  p_i$. We have
	\begin{eqnarray*}
		u(t)-v(t)&=&k_1 \int_{0}^{t}S_{\mathcal{G}}(t-s) \int_0^s(s-\tau)^{-\gamma}\left(\abs{u(\tau)}^{p_1-1}u(\tau)-\abs{v(\tau)}^{p_1-1}v(\tau)\right) \, \mathrm{d}\tau \,\mathrm{d}s\\
		&{}&\quad +\, k_2\int_{0}^{t}S_{\mathcal{G}}(t-s)\left( |u(s)|^{p_2-1}u(s)-|v(s)|^{p_2-1}v(s) \right)\mathrm{d}s.
	\end{eqnarray*}
	for all $t\in[0,T]$.  Using the $L^{q/p_i}-L^q$ estimate (Lemma \ref{Lp-Lqestimate}), we get
	\begin{eqnarray*}
		&&\|u(t)-v(t)\|_{L^q} \\
		&&\leq C \int_{0}^{t}(t-s)^{-\alpha_1p_1} \int_0^s(s-\tau)^{-\gamma}\|\left(\abs{u(\tau)}^{p_1-1}+\abs{v(\tau)}^{p_1-1}\right)|u(\tau)-v(\tau)|\|_{L^{q/p_1}} \, \mathrm{d}\tau \,\mathrm{d}s\\
		&{}&\quad +\, C \int_{0}^{t}(t-s)^{-\alpha_2p_2} \|\left(\abs{u(s)}^{p_2-1}+\abs{v(s)}^{p_2-1}\right)|u(s)-v(s)|\|_{L^{q/p_2}}  \,\mathrm{d}s\\
		&&\leq C \int_{0}^{t}(t-s)^{-\alpha_1p_1} \int_0^s(s-\tau)^{-\gamma}\left(\|u(\tau)\|_{L^q}^{p_1-1}+\|v(\tau)\|_{L^q}^{p_1-1}\right)\|u(\tau)-v(\tau)\|_{L^q} \, \mathrm{d}\tau \,\mathrm{d}s\\
		&{}&\quad +\, C \int_{0}^{t}(t-s)^{-\alpha_2p_2} \left(\|u(s)\|_{L^q}^{p_2-1}+v(s)\|_{L^q}^{p_2-1}\right)\|u(s)-v(s)\|_{L^q}  \,\mathrm{d}s\
	\end{eqnarray*}
	where we have used H\"older's inequality with $\frac{p_i}{q}=\frac{1}{q}+\frac{p_i-1}{q}$, $i=1,2$. Define
	$$M=\sup_{t \in [0,T]} \|u(t)\|_{L^q}+\|v(t)\|_{L^q}  \qquad\hbox{and}\qquad \xi(t) =  \sup_{s \in [0,t]}  \|u(s) - v(s)\|_{L^q},$$
	for all $t\in[0,T]$. We conclude that
	\begin{eqnarray*}
		\xi(t) &\leq& C\,M^{p_1-1}  \int_{0}^{t}(t-s)^{-\alpha_1p_1}s^{1-\gamma}\xi(s) \,\mathrm{d}s +\, C\,M^{p_2-1} \int_{0}^{t}(t-s)^{-\alpha_2p_2} \xi(s) \,\mathrm{d}s\\
		&\leq& C\left(M^{p_1-1}T^{2-\gamma-\alpha_1p_1}+M^{p_2-1}T^{1-\alpha_2p_2}\right)  \xi(t) ,
	\end{eqnarray*}
	for all $t\in[0,T]$, which implies that $\xi\equiv0$ on $[0,\widetilde{T}]$ for sufficiently small $\widetilde{T}$. By a standard continuation argument, one can extend the uniqueness to the entire interval $[0,T]$. \\
	
	\noindent {\it Step 5. Maximal solution and blow-up alternative.} Using the uniqueness of the $L^q$-mild solution, we conclude the existence
	of a solution on a maximal interval $[0,T_{\max})$ where
	\[
	T_{\max}:=\sup\left\{T>0\;;\;\text{there exist a mild solution $u\in C([0,T];L^q(\R^{N+k}))$
		to \eqref{*}}\right\}.
	\]
	Obviously, $T_{\max}\leq \infty$. To prove that $\|u\|_{L^\infty((0,t),L^q(\mathbb{R}^{N+k}))}\rightarrow\infty$ as $t\rightarrow T_{\max},$
	whenever $T_{\max}<\infty,$ we proceed by contradiction. If
	$$\liminf_{t\rightarrow T_{\max}}\|u\|_{L^\infty((0,t),L^q(\mathbb{R}^{N+k}))}=:L<\infty,$$
	then there exists a time sequence $\{t_m\}_{m\geq0}$ tending to $T_{\max}$ as $m\rightarrow\infty$ and such that
	$$\sup_{m\in\mathbb{N}}\sup_{0<s\leq t_m}\|u(s)\|_{L^q}\leq L+1.$$
	Moreover, if $0\leq t_m\leq t_m+\tau< T_{\max},$ using \eqref{branda}, we can write
	\begin{eqnarray}\label{newIE+}
		u(t_m+\tau)&=&S_{\mathcal{G}}(\tau)u(t_m)+k_1\int_0^\tau S_{\mathcal{G}}(\tau-s)\int_0^s(s-\sigma)^{-\gamma}\abs{u}^{p_1-1}u(t_m+\sigma)\,d\sigma\,ds\nonumber\\
		&& +\,k_1\int_0^\tau S_{\mathcal{G}}(\tau-s)\int_0^{t_m}(t_m+s-\sigma)^{-\gamma}\abs{u}^{p_1-1}u(\sigma)\,d\sigma\,ds\nonumber\\
		&& +\,k_2\int_0^\tau S_{\mathcal{G}}(\tau-s)\abs{u}^{p_2-1}u(s+t_m)\,ds.
	\end{eqnarray}
	Using the fact that the third term on the right-hand side in $(\ref{newIE+})$ depends only on the values of $u$ in the interval
	$(0,t_m)$ and using again a fixed-point argument with $u(t_m)$ as initial condition, one can deduce that there exists $T(L + 1) > 0$, depends on $L+1$, such that the solution
	$u(t)$ can be extended on the interval $[t_m, t_m + T(L + 1)]$ for any $m\geq0$. Thus, by the definition of the maximality time, $T_{\max}\geq t_m+T(L+1)$, for any $m\geq0$. We get the desired contradiction by letting $m\rightarrow\infty$.\\
	
	\noindent {\it Step 6. Nonnegativity of solutions.} Assume $u_0\geq0$ and $k_1,k_2\geq0$.  In this case, we can construct a nonnegative solution on some interval $[0,T]$ by applying the fixed point argument within the positive cone $X_T^+=\{u\in X_T;\;u\geq0\}$, and using Proposition \ref{properties}-(vi). By the uniqueness of solutions, it then follows that $u(t)\geq0$ for all $t\in(0,T_{\max}).$\\
	
	\noindent {\it Step 7. Decay of solution. }As $u\in B_T(R)$, by choosing $R=C\|u_0\|_{L^{q}}$, we have
	\begin{equation*}
		t^{\alpha_i} \|u(t)\|_{L^{r_i}} \leq  Ct^{\alpha_i}  \|S_\mathcal{G}(t)u_0\|_{L^{r_i}} + C R^{p_1}   t^{-\alpha_1 p_1-\gamma+2} + C R^{p_2}  t^{-\alpha_2 p_2+1}  \longrightarrow0,
	\end{equation*}
	when $t$ goes to zero, where we have used Lemma \ref{lemmaSg} and  the assumptions on the parameters $p_i,q,\gamma$.\\
	
	\noindent {\it Step 8. Continuous dependence.} 	Let $u,v\in X_T$ be two mild solutions starting at $u_0$ and $v_0$, respectively. The existence proof shows that $u,v\in B_T(R)$ for any $u_0,v_0\in L^q(\R^{N+k}))$ satisfying $C\max\{\|u_0\|_{L^q},\|v_0\|_{L^q}\}\leq R$. Hence,
	\begin{eqnarray*}
		&& t^{\alpha_i}\|u(t)-v(t)\|_{L^{r_i}} \\
		&& \leq  C\,\|u_0-v_0\|_{L^q} + C\,t^{\alpha_i}\int_0^t(t-s)^{-\alpha_i} \int_0^s (s-\tau)^{-\gamma} \|u(\tau)\|_{L^{r_1}}^{p_1-1}\|u(\tau) - v(\tau) \|_{L^{r_1}} \mathrm{d}\tau \mathrm{d}s \nonumber \\
		&{}&\quad +\, 
		C\,t^{\alpha_i}\int_0^t(t-s)^{-\alpha_i} \|u(s)\|_{L^{r_2}}^{p_2-1}\|u(s) - v(s) \|_{L^{r_2}} \mathrm{d}s  \nonumber\\
		&&\leq  C\,\|u_0-v_0\|_{L^q} + C\left( R^{p_1-1}T^{2-\gamma-\alpha_1 p_1} + R^{p_2-1}T^{1-\alpha_2 p_2} \right) \max_{i=1,2} \sup_{t \in (0,T)}t^{\alpha_i}\|u(t)-v(t)\|_{L^{r_i}}\nonumber\\
		&&\leq  C\,\|u_0-v_0\|_{L^q} + \frac{1}{2}\max_{i=1,2} \sup_{t \in (0,T)}t^{\alpha_i}\|u(t)-v(t)\|_{L^{r_i}},
	\end{eqnarray*}
	for all $t\in(0,T)$, $i=1,2$, by using the smallness choice of $T$. This implies
	\begin{equation}\label{deplin1}
		\max_{i=1,2} \sup_{t \in (0,T)}t^{\alpha_i}\|u(t)-v(t)\|_{L^{r_i}} \leq C\|u_0-v_0\|_{L^q}.
	\end{equation}
	In addition,
	\begin{eqnarray*}
		&&\|u(t)-v(t)\|_{L^q} \\
		&&\leq  C\,\|u_0-v_0\|_{L^q} + C\int_0^t \int_0^s (s-\tau)^{-\gamma} \|u(\tau)\|_{L^{r_1}}^{p_1-1}\|u(\tau) - v(\tau) \|_{L^{r_1}} \mathrm{d}\tau \mathrm{d}s  \\
		&{}&\quad + \,C\int_0^t \|u(s)\|_{L^{r_2}}^{p_2-1}\|u(s) - v(s) \|_{L^{r_2}} \mathrm{d}s  \\
		&&\leq C\,\|u_0-v_0\|_{L^q} + C\left( R^{p_1-1}T^{2-\gamma-\alpha_1 p_1} + R^{p_2-1}T^{1-\alpha_2 p_2} \right) \max_{i=1,2} \sup_{t \in (0,T)}t^{\alpha_i}\|u(t)-v(t)\|_{L^{r_i}}\nonumber\\
		&&\leq  C\,\|u_0-v_0\|_{L^q} ,
	\end{eqnarray*}
	for all $t\in[0,T]$, where we have used \eqref{deplin1} and the smallness of $T$. It follows
	\begin{equation}\label{deplin2}
		\sup_{t \in [0,T]} \|u(t)-v(t)\|_{L^q} \leq C\|u_0 - v_0\|_{L^q} .
	\end{equation}
	Combining \eqref{deplin1}-\eqref{deplin2} we get our desired estimate \eqref{contdep}.
\end{proof}

\subsection{Local existence with smooth initial condition}

The results in Theorem \ref{teoexun} hold for $u_0\in C_0(\R^{N+k})$. In fact, in this case the proof would run easier (see e.g. \cite{CDW, Fino-Ki-12}).
\begin{theorem}[Local existence with smooth initial condition]\label{T0+}${}$\\
	Let $u_0\in C_0(\mathbb{R}^{N+k})$ and $p_i>1$, for $i=1,2$. Then, there exist a maximal
	time $0<T_{\max}=T_{\max}(u_0)\leq\infty$ and a unique mild solution 
	$$u\in C([0,T_{\max}),C_0(\mathbb{R}^{N+k}))$$
	to problem \eqref{*}. Moreover, either $T_{\max}=\infty$, or else $T_{\max}<\infty$ and 
	$$\|u\|_{L^\infty((0,t)\times\mathbb{R}^{N+k})}\rightarrow\infty\qquad \mathrm{as}\,\, t\rightarrow T_{\max}.$$ In addition, if $u_0\in L^r(\mathbb{R}^{N+k})\cap C_0(\mathbb{R}^{N+k})$ for some $1\leq r<\infty$, then 
	$$u\in C([0,T_{\max}),L^r(\mathbb{R}^{N+k})\cap C_0(\mathbb{R}^{N+k})).$$
	Finally, if $u_0\geq0$ and $k_1,k_2\geq0$, then then the mild solution remains nonnegative for all $t\in[0,T_{\max})$.
\end{theorem}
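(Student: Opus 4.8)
The plan is to mirror the fixed-point scheme of Theorem \ref{teoexun}, but in the simpler functional framework where no time-weighted norms are needed, since $C_0(\R^{N+k})$ is preserved by the nonlinearities. First I would fix $T>0$, set $R = \|u_0\|_\infty$, and work in the complete metric space
$$
B_T(R) = \{\, u \in C([0,T], C_0(\R^{N+k})) \;:\; \sup_{t\in[0,T]}\|u(t)\|_\infty \le 2R \,\}
$$
with the distance induced by $\sup_{t\in[0,T]}\|\cdot\|_\infty$; this is a closed subset of a Banach space, hence complete. On this set define $\Lambda(u)$ by the right-hand side of \eqref{branda}. The key point is that, by Proposition \ref{properties}(iii)--(iv), $S_{\mathcal G}(t)$ is a strongly continuous contraction semigroup on $C_0(\R^{N+k})$, so $\|S_{\mathcal G}(t)u_0\|_\infty \le \|u_0\|_\infty$, and the maps $s\mapsto |u(s)|^{p_i-1}u(s)$ are continuous from $[0,T]$ into $C_0(\R^{N+k})$ whenever $u$ is. The memory term $\int_0^s (s-\tau)^{-\gamma}|u(\tau)|^{p_1-1}u(\tau)\,d\tau$ has $L^\infty$-norm bounded by $\frac{s^{1-\gamma}}{1-\gamma}(2R)^{p_1}$, which is continuous and vanishing at infinity in the space variable (uniform limit of such functions), so the time integrals defining $\Lambda(u)$ land in $C([0,T],C_0)$ by the argument of Lemma 4.1.5 in \cite{Caz-Hau-98} as used in Step 2 of the previous proof.

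Next I would carry out the self-mapping and contraction estimates. Using the contraction property of the semigroup on $C_0$,
$$
\|\Lambda(u)(t)\|_\infty \le R + |k_1|\,\frac{t^{2-\gamma}}{(1-\gamma)(2-\gamma)}(2R)^{p_1} + |k_2|\,t\,(2R)^{p_2},
$$
so for $T$ small depending on $R$ the right-hand side is $\le 2R$; similarly, using the elementary inequality $\big||a|^{p-1}a - |b|^{p-1}b\big| \le p\,(|a|^{p-1}+|b|^{p-1})|a-b|$, one gets
$$
\sup_{t\in[0,T]}\|\Lambda(u)(t)-\Lambda(v)(t)\|_\infty \le C\big(R^{p_1-1}T^{2-\gamma} + R^{p_2-1}T^{1}\big)\sup_{t\in[0,T]}\|u(t)-v(t)\|_\infty,
$$
which is a $\tfrac12$-contraction for $T$ small. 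Banach's fixed point theorem then yields a unique mild solution in $B_T(R)$; uniqueness in the whole of $C([0,T],C_0)$ follows by a Gronwall argument on $\xi(t)=\sup_{[0,t]}\|u-v\|_\infty$ exactly as in Step 4 above (the kernel $(t-s)^{-\alpha_i p_i}$ is replaced by an integrable, in fact bounded, one). The maximal solution on $[0,T_{\max})$ and the blow-up alternative $\|u\|_{L^\infty((0,t)\times\R^{N+k})}\to\infty$ as $t\to T_{\max}<\infty$ are obtained by the same continuation argument as in Step 5, rewriting the integral equation from $t_m$ via the analogue of \eqref{newIE+} and re-running the fixed point with $u(t_m)$ as data, the local existence time depending only on $\|u(t_m)\|_\infty$. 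Nonnegativity when $u_0\ge 0$ and $k_1,k_2\ge 0$ follows by running the fixed point in the closed cone $B_T(R)\cap\{u\ge 0\}$ and invoking Proposition \ref{properties}(vi), then using uniqueness.

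For the additional $L^r$-regularity claim, suppose $u_0\in L^r\cap C_0$. I would run the fixed point simultaneously in $C([0,T], L^r\cap C_0)$ with norm $\|u\|_\infty + \|u\|_{L^r}$: the $L^r$-estimate uses $\|S_{\mathcal G}(t-s)g\|_{L^r}\le\|g\|_{L^r}$ (Lemma \ref{Lp-Lqestimate} with $p=q=r$) together with $\big\||u(s)|^{p_i-1}u(s)\big\|_{L^r}\le\|u(s)\|_\infty^{p_i-1}\|u(s)\|_{L^r}$, so the nonlinear terms are controlled by mixed $\|\cdot\|_\infty^{p_i-1}\|\cdot\|_{L^r}$ products and close the argument for small $T$; by uniqueness this solution coincides with the $C_0$-solution, and the bound propagates up to $T_{\max}$ since on any $[0,t]\subset[0,T_{\max})$ the $L^\infty$-norm is finite. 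The main obstacle, though purely technical, is verifying that all the time-convolution integrals genuinely define elements of $C([0,T],C_0)$ rather than merely $L^\infty$-functions — i.e. the continuity in $t$ and the vanishing-at-infinity in $z$ of $\int_0^t S_{\mathcal G}(t-s)f(s)\,ds$; this is handled by the strong continuity of $S_{\mathcal G}$ on $C_0$, the (weak) singularity of $(s-\tau)^{-\gamma}$ with $\gamma<1$ making the inner integral continuous in $s$ with values in $C_0$, and a dominated-convergence/uniform-approximation argument, exactly in the spirit of \cite{CDW, Fino-Ki-12}.
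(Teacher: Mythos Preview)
Your proposal is correct and follows exactly the approach the paper indicates: the authors do not give a detailed proof of Theorem~\ref{T0+} but simply remark that ``in this case the proof would run easier (see e.g.\ \cite{CDW, Fino-Ki-12})'', i.e.\ the same fixed-point scheme as in Theorem~\ref{teoexun} carried out in $C([0,T],C_0(\R^{N+k}))$ without time weights. One small imprecision: in the continuation/blow-up step you write that the local existence time depends only on $\|u(t_m)\|_\infty$, but because of the memory term (the third integral in \eqref{newIE+}) it actually depends on $\sup_{0<s\le t_m}\|u(s)\|_\infty$; since you are arguing by contradiction against a uniform bound $L+1$ on this very quantity, the argument goes through unchanged.
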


\subsection{Comparison Principle}\label{comparisonp}

In this subsection, we establish a comparison principle for mild solutions of \eqref{*}, in which the nonlinear terms $u\mapsto |u|^{p_i-1}u$ (for $i=1,2$) are replaced by general nonlinearities $f$ and $g$ satisfying the following growth conditions: 
\begin{equation}\label{growth1}
	|f(u)-f(v)| \leq c|u-v|(|u|^{p_1-1} + |v|^{p_1-1}),\quad\quad |g(u)-g(v)| \leq c|u-v|(|u|^{p_2-1} + |v|^{p_2-1}).
\end{equation}
For a given an initial condition $u_0$, we denote by $u(t;u_0,f,g)$ the $X$-mild solution of \eqref{*}, where the notation highlights the dependence on the initial data and the nonlinear terms $f$ and $g$. Here, $X$ denotes either $C_0(\mathbb{R}^{N+k})$ or $L^q(\mathbb{R}^{N+k})$, with $1\leq q <\infty$, satisfying
$$q>\max\limits_{i=1,2}\frac{N+2k}{2}(p_i-1).$$

We are now in a position to state a comparison principle, motivated by the use of the Picard iteration associated with the mild solution. For this purpose, we introduce the notion of a mild supersolution, which will be used in the formulation of the principle. A function $v:[0,T]\to X$ is said to be a {\bf mild supersolution} of \eqref{*}, if $v(t) \geq \Lambda_{fg}(v)(t)$, for all $t\in[0,T]$, where the operator $\Lambda_{fg}$ is defined in equation \eqref{lambdafg} below.

\begin{theorem}\label{comparison}
	Let $u_0,v_0\in X$, $k_1,k_2\geq 0$, and $f,g, \tilde{f}, \tilde{g}$ functions satisfying \eqref{growth1}.
	\begin{itemize}
		\item[(i)] If $0\leq u_0\leq v_0$, and $f(\mu)\leq \tilde{f}(\nu)$, $g(\mu)\leq \tilde{g}(\nu)$, for all $0\leq\mu\leq \nu$, then 
		$$0\leq u(t;u_0,f,g) \leq v(t;v_0,\tilde{f},\tilde{g})\quad\text{for all}\,\, t\in[0,\tilde{T}],$$
		where $\tilde{T}>0$ is a common time of existence for $u$ and $v$.
		\item[(ii)]  If $v$ is a mild supersolution of \eqref{*}, and $u$ is a mild solution with $u_0\geq0$, then $0\leq u\leq v$ for all $t\in[0,T]$.
	\end{itemize}
\end{theorem}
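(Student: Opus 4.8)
The plan is to exploit that, on a short time interval, the $X$-mild solution is the limit of the Picard iterates of the solution operator, and that this operator is order-preserving. Write $\Lambda_{w_0,f,g}$ for the natural analogue of \eqref{solop} with initial datum $w_0$ and with $f,g$ in place of $s\mapsto|s|^{p_i-1}s$ (so $\Lambda_{fg}=\Lambda_{u_0,f,g}$). It is built from three ingredients that all respect the order on $X$: the semigroup $S_{\mathcal{G}}(t)$, which maps nonnegative functions to nonnegative functions (Proposition \ref{properties}(vi)); the kernels $(t-s)^{-\gamma}$ and $(s-\tau)^{-\gamma}$, which are nonnegative; and the coefficients $k_1,k_2\ge0$. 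Hence a \emph{monotonicity lemma} holds: if $w_0\le\tilde w_0$ in $X$, $0\le w(\tau)\le\tilde w(\tau)$ pointwise for every $\tau$, and $f(\mu)\le\tilde f(\nu)$, $g(\mu)\le\tilde g(\nu)$ whenever $0\le\mu\le\nu$, then $\Lambda_{w_0,f,g}(w)(t)\le\Lambda_{\tilde w_0,\tilde f,\tilde g}(\tilde w)(t)$ for all $t$; moreover $\Lambda_{w_0,f,g}$ preserves nonnegativity once $w_0\ge0$ and $f,g$ are nonnegative on $[0,\infty)$ (which is the setting of the local theory giving $u\ge0$, cf. Theorem \ref{teoexun}(iv)). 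This lemma is the engine of both parts, and only requires re-running, term by term, the elementary estimates already used in the proof of Theorem \ref{teoexun}.

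For (i), fix $R\ge C\max\{\|u_0\|_{L^q},\|v_0\|_{L^q}\}$ and shrink $\tilde T>0$ (depending on $R$) so that the contraction estimates of Theorem \ref{teoexun} hold and both $u$ and $v$ belong to $B_{\tilde T}(R)$ (in the $C_0$ case, the analogous ball behind Theorem \ref{T0+}); then on $[0,\tilde T]$ the Picard iterates converge to the respective solutions in the metric of $B_{\tilde T}(R)$. Set $u^{(0)}=S_{\mathcal{G}}(\cdot)u_0$, $v^{(0)}=S_{\mathcal{G}}(\cdot)v_0$ and $u^{(n+1)}=\Lambda_{u_0,f,g}(u^{(n)})$, $v^{(n+1)}=\Lambda_{v_0,\tilde f,\tilde g}(v^{(n)})$. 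Since $u_0\ge0$ and $v_0-u_0\ge0$, positivity of $S_{\mathcal{G}}$ gives $0\le u^{(0)}\le v^{(0)}$, and the monotonicity lemma propagates $0\le u^{(n)}\le v^{(n)}$ for all $n$ by induction. To pass to the limit: $u^{(n)}(t)\to u(t)$ and $v^{(n)}(t)\to v(t)$ in $L^{r_i}(\mathbb{R}^{N+k})$ (uniformly in $z$ in the $C_0$ case) for each $t\in(0,\tilde T]$, hence a.e. in $z$ along a subsequence, so the inequalities $0\le u^{(n)}(t)\le v^{(n)}(t)$ survive and give $0\le u(t)\le v(t)$ on $(0,\tilde T]$; at $t=0$ this is the hypothesis $0\le u_0\le v_0$. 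If $\tilde T$ in the statement is larger than the contraction time, one repeats the argument on $[\tilde T_1,\tilde T_2],\dots$ starting from the already ordered data $u(\tilde T_j)\le v(\tilde T_j)$, covering $[0,\tilde T]$ in finitely many steps.

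For (ii), let $v$ be a mild supersolution, which we take to be nonnegative (either by definition or because $v\ge\Lambda_{fg}(v)\ge S_{\mathcal{G}}(\cdot)u_0\ge0$), so that $v\ge\Lambda_{fg}(v)$ with the same datum $u_0\ge0$; let $u^{(0)}=S_{\mathcal{G}}(\cdot)u_0$, $u^{(n+1)}=\Lambda_{fg}(u^{(n)})$ be the Picard iterates converging to $u$. We show $0\le u^{(n)}\le v$ for all $n$ by induction. For $n=0$: $u^{(0)}\ge0$ since $u_0\ge0$, and $u^{(0)}=S_{\mathcal{G}}(\cdot)u_0\le\Lambda_{fg}(v)\le v$, the first inequality because the extra (nonnegative) terms of $\Lambda_{fg}(v)$ only add to $S_{\mathcal{G}}(\cdot)u_0$. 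For the inductive step: from $0\le u^{(n)}\le v$ the monotonicity lemma yields $u^{(n+1)}=\Lambda_{fg}(u^{(n)})\le\Lambda_{fg}(v)\le v$, and $u^{(n+1)}\ge0$. Letting $n\to\infty$ exactly as in (i) gives $0\le u\le v$ on each contraction interval, and the continuation step extends this to all of $[0,T]$.

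The main obstacle — and the point requiring genuine care — is the sign bookkeeping that keeps the iteration inside the nonnegative cone: one needs $f,g\ge0$ on $[0,\infty)$ so that $\Lambda_{w_0,f,g}$ preserves $\{w\ge0\}$ (equivalently, the nonnegativity of solutions with nonnegative data, as in Theorem \ref{teoexun}(iv) and Theorem \ref{T0+}), and, in (ii), the nonnegativity of the supersolution so that $v$ dominates $S_{\mathcal{G}}(\cdot)u_0$. A secondary technical point is that the Picard iterates only converge a priori on a short contraction interval; this is resolved by the step-by-step continuation above, which is legitimate thanks to the uniqueness already established in Theorem \ref{teoexun}. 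Everything else is a routine repetition of the estimates in the proof of Theorem \ref{teoexun}, now used to verify monotonicity rather than contractivity.
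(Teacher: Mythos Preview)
Your proof of part (i) is the paper's: both set up Picard iterates, use positivity of $S_{\mathcal{G}}$ together with the ordering hypotheses to propagate $0\le u^{(n)}\le v^{(n)}$ by induction, and then pass to the limit pointwise (in the $C_0$ case) or a.e.\ along a common subsequence (in the $L^q$ case).

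For part (ii) the approaches diverge. You iterate \emph{from below}: starting at $u^{(0)}=S_{\mathcal{G}}(\cdot)u_0$, you trap each iterate under $v$ via $u^{(n+1)}=\Lambda_{fg}(u^{(n)})\le\Lambda_{fg}(v)\le v$ and then use that the contraction scheme already delivers $u^{(n)}\to u$. The paper instead iterates \emph{from above}: it sets $w_1=v$, $w_{k}=\Lambda_{fg}(w_{k-1})$, observes that $(w_k)$ is non-increasing (since $w_2=\Lambda_{fg}(v)\le v=w_1$ and $\Lambda_{fg}$ is monotone) and contained in $[0,v]$, lets $w_k\downarrow w$ by monotone/dominated convergence on all of $[0,T]$, identifies $w$ as a fixed point of $\Lambda_{fg}$, and then invokes uniqueness to conclude $u=w\le v$. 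Your route is more direct and avoids the final uniqueness appeal, but it only yields convergence on a contraction interval and so needs the step-by-step continuation you sketch --- which, because of the memory term, must really be carried out through the shifted representation \eqref{newIE+} rather than a naive restart from $u(\tilde T_j)\le v(\tilde T_j)$; the paper's descending scheme handles the whole of $[0,T]$ in one pass at the cost of the uniqueness step. Both arguments tacitly require $f,g$ to be nondecreasing and nonnegative on $[0,\infty)$ so that $\Lambda_{fg}$ is order-preserving and leaves the nonnegative cone invariant, a point you flag more explicitly than the paper does.
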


\begin{proof} We begin by defining the Picard sequences:
	$$u_{1}(t) =S_\mathcal{G}(t)u_0,\qquad u_{k}(t) =\Lambda_{fg} (u_{k-1})(t),\,\,\, k\geq2 ,$$
	$$v_{1}(t) =S_\mathcal{G}(t)v_0 ,\qquad v_{k}(t) =\widetilde{\Lambda}_{\tilde{f}\tilde{g}} (v_{k-1})(t),\,\,\, k\geq2 , $$
	where $\Lambda_{fg}$ and $\widetilde{\Lambda}_{\tilde{f}\tilde{g}}$ refer to the operator introduced in equation \eqref{solop}, emphasizing the dependence on the nonlinearities in the right-hand side:
	\begin{equation}\label{lambdafg}
		\Lambda_{fg}(u)(t):=S_{\mathcal{G}}(t)u_0 +k_1 \int_{0}^{t}S_{\mathcal{G}}(t-s) \int_0^s(s-\tau)^{-\gamma}f(u(\tau))\, \mathrm{d}\tau \,\mathrm{d}s +k_2 \int_{0}^{t}S_{\mathcal{G}}(t-s)g(u(s)) \mathrm{d}s,
	\end{equation}
	$$\widetilde{\Lambda}_{\tilde{f}\tilde{g}}(u)(t):=S_{\mathcal{G}}(t)v_0 +k_1 \int_{0}^{t}S_{\mathcal{G}}(t-s) \int_0^s(s-\tau)^{-\gamma}\tilde{f}(u(\tau))\, \mathrm{d}\tau \,\mathrm{d}s +k_2 \int_{0}^{t}S_{\mathcal{G}}(t-s)\tilde{g}(u(s)) \mathrm{d}s.$$
	By Proposition \ref{properties}-(vi), we know that the semigroup $S_\mathcal{G}(t)$ preserves order:
	\begin{equation}\label{monotonicity}
		S_\mathcal{G}( t)\varphi \leq S_\mathcal{G}(t)\psi,\quad \hbox{whenever}\,\,\, \varphi\leq \psi.
	\end{equation} 
	Hence, $0\leq u_1(t)\leq v_1(t)$. We now apply induction to show that $0\leq u_k(t)\leq v_k(t)$, for all $k\in\N$. Assume this holds for some fixed $k$. Then, using the monotonicity of the semigroup $S_\mathcal{G}(t)$ (cf. \eqref{monotonicity}) together with the structural conditions imposed on the nonlinearities $f, \tilde{f},g,\tilde{g}$, we obtain
	$$0\leq u_{k+1}(t)=\Lambda_{fg}(u_k)(t)\leq \widetilde{\Lambda}_{\tilde{f}\tilde{g}}(v_k)(t)=v_{k+1}(t).$$
	Hence, the inequality is preserved at each iteration step. This proves part $(i)$, noting that the corresponding mild solutions are obtained as pointwise limits of the Picard sequences, depending on the functional setting:
	\begin{itemize}
		\item  If $X=C_0(\mathbb{R}^{N+k})$, then the convergence is uniform on compact sets and, in particular, pointwise. Thus,
		$$u(t;u_0,f,g) = \lim\limits_{n\rightarrow\infty} u_n(t),\qquad  v(t;v_0,\tilde{f},\tilde{g}) = \lim\limits_{n\rightarrow\infty} v_n(t),$$
		where the limits are taken pointwise.
		\item  If $X=L^q(\mathbb{R}^{N+k})$,  then by norm convergence and standard results, there exists a common subsequence along which both sequences converge pointwise almost everywhere. That is,
		$$u(t;u_0,f,g) = \lim\limits_{k\rightarrow\infty} u_{n_k}(t),\qquad  v(t;v_0,\tilde{f},\tilde{g}) = \lim\limits_{k\rightarrow\infty} v_{n_k}(t),$$ 
		where the limits are taken almost everywhere along the same subsequence $(n_k)_k$.\\
	\end{itemize}
	To prove part $(ii)$, it suffices to observe that since $v$ is a mild supersolution, we can construct a non-increasing Picard sequence $(w_k)_k$ defined by
	$$
	w_{1}(t) =v(t),\qquad\quad w_{k}(t) =\Lambda_{fg} (w_{k-1})(t),\,\,\hbox{for}\,\, k\geq2.
	$$
	By part $(i)$, we have
	$$w_{2}(t) =\Lambda_{fg} (w_1)(t)=\Lambda_{fg} (v)(t)\leq v(t)=w_1(t),$$ 
	and an induction argument using the monotonicity property of $\Lambda_{fg}$ (as established in part $(i)$) yields that the sequence $(w_k)_k$  is non-increasing. Moreover, since the nonlinearities $f,g$ are nonnegative and the semigroup $S_\mathcal{G}(t)$ preserves nonnegativity, each $w_k(t)$ remains bounded, satisfying $0\leq w_k(t)\leq v(t)$ for all $k\geq1$.
	\begin{itemize}
		\item  If the functional space is $X=C_0(\mathbb{R}^{N+k})$, then $(w_k)_k$ converges uniform on compact sets and, in particular,
		pointwise to a limit function $w(t)$. By standard monotone convergence results in Banach spaces, the convergence is strong in $C([0,T],C_0(\mathbb{R}^{N+k})))$. Passing to the limit in the iteration yields
		$$w(t)= \Lambda_{fg} (w)(t)\leq v(t),$$
		in particular, $w$ is a mild solution of problem \eqref{*}. By uniqueness, we conclude that $u=w\leq v$.
		\item  If instead $X=L^q(\mathbb{R}^{N+k})$,  then the sequence $(w_k)_k$ converges in norm in $L^q$, and in particular, there exists a subsequence $(w_{k_j})_j$ that converges almost everywhere to the same limit function $w$. Passing to the limit almost everywhere in the integral formulation gives
		$$w(t)= \Lambda_{fg} (w)(t)\leq v(t),$$
		so again $w$ is a mild solution of \eqref{*}. Uniqueness implies $u=w\leq v$.
	\end{itemize} 
\end{proof}

\begin{remark}
	In particular, Theorem \ref{comparison} gives another proof for the nonnegativity in Theorems \ref{teoexun} and \ref{T0+}.
\end{remark}

\section{Global Existence}\label{global}

In this section, we prove the existence of global solutions. We also prove some comparison principles needed in our approach.

Let us define the following critical exponents.
\begin{equation}\label{critexp}
	p_1^* = \max\left\{\frac{1}{\gamma} , p_\gamma\right\} \qquad\hbox{and}\qquad	p_2^* = 1+ \frac{2}{N+2k} ,
\end{equation}
where
\begin{eqnarray*}
	p_\gamma =	1+ \frac{4-2\gamma}{N+2k-2+2\gamma} ,
\end{eqnarray*}
and
\begin{equation}
	p_2^{**}=\max\left\{\frac{\gamma-\gamma^2+1}{\gamma(2-\gamma)} , 1+\frac{2}{N+2k-2+2\gamma}\right\} .
\end{equation}

We also define $\tilde{p}_2:= (p_1+1-\gamma)/(2-\gamma)$ and the scaled exponents as
$$q_{\mathrm{sc}}=\left\{\begin{array}{ll}
	\displaystyle \frac{(N+2k)(p_1-1)}{2(2-\gamma)}  =: q_{sc_1},&\qquad\hbox{if}\,\, p_2\geq \tilde{p}_2,\\\\
	\displaystyle \frac{(N+2k)(p_2-1)}{2} =: q_{sc_2} ,&\qquad\hbox{if}\,\, p_2\leq \tilde{p}_2.\\
\end{array}
\right.$$
\begin{theorem}\label{global1}
	The following assertions hold:
	\begin{enumerate}
		\item[$(\mathrm{i})$] Let $u_0\in  C_0(\R^{N+k})\cap L^{q_{\mathrm{sc}}}(\R^{N+k})$, with $\|u_0\|_{L^{q_{\mathrm{sc}}}}$ sufficiently small.  If $k_1,k_2\in\mathbb{R}$, $p_1>p_1^*$ and $p_2=\tilde{p}_2$, then there exists a unique global mild solution $u\in C([0,\infty);C_0(\R^{N+k}))$ of \eqref{*}.
		\item[$(\mathrm{ii})$] Let $u_0\in  C_0(\R^{N+k})\cap L^{q_{\mathrm{sc}}}(\R^{N+k})$, with $\|u_0\|_{L^{\infty}}$ and  $\|u_0\|_{L^{q_{\mathrm{sc}}}}$ sufficiently small. If $u_0\geq0$,  $k_1,k_2>0$, $p_1>p_1^{*}$ and $p_2>p_2^{**}$, then problem \eqref{*} admits a unique global mild solution 
		$u\in C([0,\infty);C_0(\mathbb{R}^{N+k}).$
		\item[$(\mathrm{iii})$] Let $u_0\in X$ where $X$ is the function space defined in Subsection \ref{comparisonp}. If $u_0\geq0$, $k_i\leq0$ for $i=1,2$, then any nonnegative mild solution of \eqref{*} is global.
		\item[$(\mathrm{iv})$] Let $u_0\in  C_0(\R^{N+k})\cap L^{q_{\mathrm{sc_1}}}(\R^{N+k})$, with $\|u_0\|_{L^{q_{\mathrm{sc_1}}}}$ sufficiently small. If $u_0\geq0$, $k_1>0$, $k_2\leq0$, and $p_1>p_1^*$, then any nonnegative mild solution of \eqref{*} is global.
		\item[$(\mathrm{v})$]  Let $u_0\in  C_0(\R^{N+k})\cap L^{q_{\mathrm{sc_2}}}(\R^{N+k})$, with $\|u_0\|_{L^{q_{\mathrm{sc_2}}}}$ sufficiently small. If $u_0\geq0$, $k_2>0$, $k_1\leq 0$, and $p_2>p_2^*$, then any nonnegative mild solution of \eqref{*} is global.
	\end{enumerate}
\end{theorem}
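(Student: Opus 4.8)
We regard the five assertions as two kinds: the small--data statements (i)--(ii), obtained from a contraction argument run globally in time in a scaling--invariant space, and the sign/absorption statements (iii)--(v), in which the unfavourable sign of a coefficient is exploited to dominate the solution by one of an equation already known to be globally solvable.

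\emph{Small data, parts (i) and (ii).} The plan is to repeat the fixed--point scheme of Theorem~\ref{teoexun} with $T=\infty$ and with the base Lebesgue exponent taken to be the scaling--critical $q_{\mathrm{sc}}$. For suitable auxiliary exponents $\rho_i$ and weights $\beta_i=\frac{N+2k}{2}\bigl(\frac{1}{q_{\mathrm{sc}}}-\frac{1}{\rho_i}\bigr)$ one works in
$$\mathcal{Y}=\Bigl\{u:\ \|u\|_{\mathcal{Y}}:=\sup_{t>0}\|u(t)\|_{L^{q_{\mathrm{sc}}}}+\max_{i=1,2}\ \sup_{t>0}\ t^{\beta_i}\|u(t)\|_{L^{\rho_i}}\leq 2M\Bigr\},\qquad M\geq C\|u_0\|_{L^{q_{\mathrm{sc}}}},$$
and checks, with the $L^p$--$L^q$ estimate of Lemma~\ref{Lp-Lqestimate} and Beta--function computations, that the map $\Lambda$ of \eqref{solop} takes the ball into itself and contracts there. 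The key algebraic fact is that $p_1>p_1^*$ makes every time integral coming from the memory term --- a double integral against the kernel $(s-\tau)^{-\gamma}$ --- converge over $(0,\infty)$ (this is exactly where $p_\gamma$ and $1/\gamma$ enter, as in Cazenave--Dickstein--Weissler \cite{CDW}), while in (i) the special value $p_2=\tilde{p}_2$ forces the $k_2$--term to scale identically to the $k_1$--term, so the same weights serve both. For (ii) one adjoins an $L^\infty$--control with a time weight $(1+t)^{-\sigma}$, $\sigma=\frac{N+2k}{2q_{\mathrm{sc}}}$, obtained by interpolating the $L^{q_{\mathrm{sc}}}$--smoothing against the $L^\infty$--smallness of $u_0$; the hypothesis $p_2>p_2^{**}$ is precisely what makes the differently--scaling power term integrate to a finite quantity against this extra weight. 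Smallness of the initial norms makes the ball invariant and $\Lambda$ a contraction globally in $t$; uniqueness in $\mathcal{Y}$ is as in Theorem~\ref{teoexun}. Finally, after the regularity bootstrap, on each $[0,T]$ the $\mathcal{Y}$--solution coincides by uniqueness with the local $C_0$--solution of Theorem~\ref{T0+} (this is where $u_0\in C_0$ enters), and the global bound $\|u\|_{\mathcal{Y}}\leq 2M$ together with the blow--up alternative forces that solution's maximal time to be $\infty$, giving $u\in C([0,\infty);C_0(\R^{N+k}))$.

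\emph{Part (iii).} If $k_1,k_2\leq 0$ and $u\geq 0$ solves \eqref{branda}, both Duhamel integrals there are nonnegative ($|u|^{p_i-1}u=u^{p_i}\geq 0$ and $S_{\mathcal{G}}$ preserves positivity), hence $0\leq u(t)\leq S_{\mathcal{G}}(t)u_0$ and $\|u(t)\|_{X}\leq\|u_0\|_{X}$ by contractivity; the blow--up alternative of Theorems~\ref{teoexun}--\ref{T0+} (or, in the remaining $L^q$--range, iterated local continuation with the nonincreasing $L^q$--norm) then forces $T_{\max}=\infty$.

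\emph{Parts (iv) and (v).} For (iv), with $k_1>0$, $k_2\leq 0$, $u_0\geq 0$, let $w$ be the global mild solution with data $u_0$ of the memory--only problem (i.e. \eqref{*} with $k_2=0$), global by part~(i) with $k_2=0$ --- equivalently, the Grushin analogue of the Cazenave--Dickstein--Weissler theorem --- thanks to $p_1>p_1^*$ and the smallness of $\|u_0\|_{L^{q_{\mathrm{sc_1}}}}$. Then $w\geq0$, and because $k_2\leq0$, $w$ is a mild supersolution of \eqref{*}. Since the solution operator of \eqref{*} fails to be order--preserving when $k_2<0$, rather than quoting Theorem~\ref{comparison}(ii) one subtracts the integral equations for $u$ and $w$, discards the nonpositive $k_2|u|^{p_2-1}u$ contribution, and estimates $z(t):=(u(t)-w(t))_+$ using that $S_{\mathcal{G}}$ preserves positivity and the bound $(u^{p_1}-w^{p_1})_+\leq C(u^{p_1-1}+w^{p_1-1})z$; this yields a singular Gronwall inequality for the norms of $z$, exactly as in the uniqueness step of Theorem~\ref{teoexun}, whence $z\equiv 0$, i.e. $0\leq u\leq w$ on $[0,T_{\max})$. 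Therefore $\|u(t)\|_\infty\leq\|w(t)\|_\infty$ stays bounded on bounded time intervals and the blow--up alternative gives $T_{\max}=\infty$. Part~(v) is the mirror image: with $k_2>0$, $k_1\leq 0$, discard the absorbing memory term and compare with the global solution of $u_t-\Delta_{\mathcal{G}}u=k_2|u|^{p_2-1}u$, which is global for $p_2>p_2^*=1+\frac{2}{N+2k}$ and small $\|u_0\|_{L^{q_{\mathrm{sc_2}}}}$ by Oliveira--Viana \cite{Oli-Vi-23} (or by part~(i)'s argument with $k_1=0$); the same positive--part Gronwall estimate and blow--up alternative conclude.

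\emph{Main obstacle.} The delicate point is part~(ii): unlike (i), the two nonlinearities scale genuinely differently, so no single weighted norm closes the estimates and the fixed point must carry several weights at once --- an $L^{q_{\mathrm{sc}}}$ weight, $L^{\rho_i}$ decay weights, and an $L^\infty$ weight --- with \emph{all} Beta integrals arising from the memory double--integral and from the power term required to converge on $(0,\infty)$; making the definition of $p_2^{**}$ match exactly these integrability constraints (in particular the one coming from the interaction of $(s-\tau)^{-\gamma}$ with the $t$--power weights) is the crux. A secondary point of care, in (iv)--(v), is that Theorem~\ref{comparison} assumes $k_1,k_2\geq0$; this is circumvented by absorbing the wrongly--signed term into the comparison equation and running the positive--part singular--Gronwall argument above --- presumably the content of the ``comparison principles needed in our approach'' announced at the start of the section.
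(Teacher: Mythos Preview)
Your treatment of parts (i), (iii), (iv), (v) is close to the paper's. For (i) the paper also runs a global contraction, though in a single weighted space $\{\,\sup_{t>0}t^{\beta}\|u(t)\|_{L^{q}}\le\delta\,\}$ with one auxiliary exponent $q>q_{\mathrm{sc}}$ and $\beta=\frac{N+2k}{2}\big(\frac1{q_{\mathrm{sc}}}-\frac1q\big)$, rather than carrying $L^{q_{\mathrm{sc}}}$ and two auxiliary norms; the identity $p_2=\tilde p_2$ makes both Duhamel terms produce exactly $t^{-\beta}$, so a single weight suffices. For (iii)--(v) the paper simply invokes Theorem~\ref{comparison}, absorbing the negatively--signed coefficient into the nonlinearity (e.g.\ taking $g(\mu)=k_2\mu^{p_2}\le 0=\tilde g(\nu)$ in case~(iv)); your positive--part Gronwall argument is a legitimate alternative, but it is not what the paper does.

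The genuine gap is part (ii). The paper does \emph{not} attempt a direct multi--weight fixed point. Instead it reduces to Case~(i) by comparison, splitting into two subcases. If $p_2>\tilde p_2$, one compares $u$ with the solution $v$ of the modified problem in which $p_2$ is replaced by $\tilde p_2$; since $\tilde p_2<p_2$, the pointwise inequality $w^{p_2}\le w^{\tilde p_2}$ holds whenever $0\le w<1$, and the smallness of $\|u_0\|_{L^\infty}$ together with an inductive $L^\infty$ bound on the Picard iterates (using both the $L^{q_{\mathrm{sc}}}$ smoothing for $t\ge1$ and the $L^\infty$ bound for $t\le1$) keeps every $v_n<1$, so Theorem~\ref{comparison} gives $0\le u\le v$, and $v$ is global by Case~(i). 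If $p_2<\tilde p_2$ one instead replaces $p_1$ by $\tilde p_1:=(p_2-1)(2-\gamma)+1<p_1$ and argues symmetrically; Case~(i) then requires $\tilde p_1>p_1^{*}$, and \emph{this} is exactly the condition $p_2>p_2^{**}$. So the exponent $p_2^{**}$ is not an integrability threshold for some Beta integral in a multi--weight contraction as you suggest: it encodes the algebraic requirement that, after swapping $p_1$ for $\tilde p_1$, the memory exponent still exceeds $p_1^{*}$. Your sketch neither identifies this mechanism nor addresses the case split $p_2\gtrless\tilde p_2$ (which also governs the definition of $q_{\mathrm{sc}}$), and the assertion that ``$p_2>p_2^{**}$ is precisely what makes the differently--scaling power term integrate'' is not substantiated; as it stands, Case~(ii) in your proposal is not a proof.
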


\begin{proof} We divide the proof into five separate cases, each corresponding to one of the scenarios described in the theorem. These cases differ depending on the signs and values of the coefficients $k_1,k_2$ and the exponents $p_1,p_2$, as well as assumptions on the initial data.

	\noindent {\bf Case $(\mathrm{i})$.} The proof is divided into two steps.\\
	{\it Step 1. Existence.} As $p_1>p_1^*$, then we have the possibility to
	take a positive constant $q>0$ so that
	\begin{equation}\label{estiA}
		\frac{2-\gamma}{p_1-1}-\frac{1}{p_1}<\frac{N+2k}{2
			q}<\frac{1}{p_1-1},\quad q\geq p_1.
	\end{equation}
	It follows that
	\begin{equation}\label{estiB}
		q>\frac{(N+2k)(p_1-1)}{2}>q_{sc}>1.
	\end{equation}
	Let
	\begin{equation}\label{estiC}
		\beta:=\frac{N+2k}{2 q_{sc}}-\frac{N+2k}{2
			q}=\frac{2-\gamma}{p_1-1}-\frac{N+2k}{2
			q}=\frac{1}{p_2-1}-\frac{N+2k}{2
			q}.
	\end{equation}
	Thus, by utilizing relations (\ref{estiA})-(\ref{estiC}) along with the condition $p_2<p_1$, one can easily verify that $p_i\beta<1$ for $i=1,2$, $q> p_2$, as well as the relations
	\begin{equation}\label{estiD}
		\beta>\frac{1-\gamma}{p_1-1}>0,\quad \frac{(N+2k)(p_1-1)}{2
			q}+(p_1-1)\beta+\gamma=2,\quad \frac{(N+2k)(p_2-1)}{2
			q}+(p_2-1)\beta=1,
	\end{equation}
	hold. As $u_0\in L^{q_{\mathrm{sc}}}(\R^{N+k})$, using \eqref{semigroup}, $q>q_{\mathrm{sc}}$, and (\ref{estiC}), we
	get
	\begin{equation}\label{estiE}
		\sup_{t>0}t^\beta\|S_{\mathcal{G}}(t)u_0\|_{L^q}\leq
		C \|u_0\|_{L^{q_{\mathrm{sc}}}}=\eta<\infty.
	\end{equation}
	Set
	\begin{equation}\label{estiF}
		\Xi:=\left\{u\in
		L^\infty((0,\infty),L^q(\mathbb{R}^{N+k}));\;\sup_{t>0}t^\beta\|u(t)\|_{L^q}\leq\delta\right\},
	\end{equation}
	where $\delta>0$ is to be chosen sufficiently small. If we define
	\begin{equation}\label{estiG}
		d_{\Xi}(u,v):=\sup_{t>0}t^\beta\|u(t)-v(t)\|_{L^q},\qquad \hbox{for all}\,\, u,v\in\Xi,
	\end{equation}
	then $(\Xi,d)$ is a nonempty complete metric space. For $u\in\Xi$, we define $\Phi(u)$ by
	\begin{equation}\label{estiH}
		\Phi(u)(t):=S_{\mathcal{G}}(t)u_0 +k_1 \int_{0}^{t}S_{\mathcal{G}}(t-s) \int_0^s(s-\tau)^{-\gamma}\abs{u(\tau)}^{p_1-1}u(\tau) \, \mathrm{d}\tau \,\mathrm{d}s + k_2\int_{0}^{t}S_{\mathcal{G}}(t-s) |u(s)|^{p_2-1}u(s) \mathrm{d}s,
	\end{equation}
	for all $t\geq0$. Let us prove that  $\Phi: \Xi \rightarrow \Xi$. Using $q\geq p_i$ for $i=1,2$, \eqref{estiE}, \eqref{estiF}, and Lemma \ref{Lp-Lqestimate}, we obtain for any $u \in \Xi$, 
	\begin{eqnarray}\label{estiI}
		t^\beta\|\Phi(u)(t)\|_{L^q}&\leq& \eta+\,Ct^\beta\int_0^t(t-s)^{-\frac{(N+2k)(p_1-1)}{2q}}
		\int_0^s(s-\sigma)^{-\gamma}\|u(\sigma)\|^{p_1}_{L^{q}}\,d\sigma\,ds\nonumber\\
		&{}&+\,Ct^\beta\int_0^t(t-s)^{-\frac{(N+2k)(p_2-1)}{2q}}\|u(s)\|^{p_2}_{L^{q}}\,ds\nonumber\\
		&\leq&\eta+C\delta^{p_1} t^\beta\int_0^t\int_0^s(t-s)^{-\frac{(N+2k)(p_1-1)}{2
				q}}(s-\sigma)^{-\gamma}\sigma^{-\beta p_1}\,d\sigma\,ds\nonumber\\
		&{}&+\,C\delta^{p_2} t^\beta\int_0^t(t-s)^{-\frac{(N+2k)(p_2-1)}{2
				q}}s^{-\beta p_2}\,ds.
	\end{eqnarray}
	Next, using \eqref{estiA}, \eqref{estiD}, $p_2<p_1$, and $p_i\beta<1$ for $i=1,2$, we get
	\begin{equation}\label{estiJ}
		\int_0^t\int_0^s(t-s)^{-\frac{(N+2k)(p_1-1)}{2
				q}}(s-\sigma)^{-\gamma}\sigma^{-\beta p_1}\,d\sigma\,ds= C\int_0^t(t-s)^{-\frac{(N+2k)(p_1-1)}{\beta
				q}}s^{1-\gamma-\beta p_1}\,ds=C t^{-\beta},
	\end{equation}
	and
	\begin{equation}\label{estiJ+}
		\int_0^t(t-s)^{-\frac{(N+2k)(p_2-1)}{2
				q}}s^{-\beta p_2}\,ds=Ct^{1-\frac{(N+2k)(p_2-1)}{2
				q}-\beta p_2}=C t^{-\beta},
	\end{equation}
	for all $t\geq0.$ So, we deduce from \eqref{estiI}-\eqref{estiJ+}
	that
	\begin{equation}\label{estiK}
		t^\beta\|\Phi(u)(t)\|_{L^q}\leq \eta+C(\delta^{p_1} + \delta^{p_2}).
	\end{equation}
	Therefore, if $\eta$ and $\delta$ are chosen small enough so that $\eta+C(\delta^{p_1} + \delta^{p_2})\leq\delta,$ we see that
	$\Phi:\Xi\rightarrow\Xi.$ Similar calculations show that (assuming
	$\eta$ and $\delta$ small enough) $\Phi$ is a strict contraction, so
	it has a unique fixed point $u\in\Xi$ which is a mild solution of \eqref{*}.
	
	We now establish that $u\in C([0,\infty),C_0(\mathbb{R}^{N+k}))$. To begin, we first show that $u\in C([0,T],C_0(\mathbb{R}^{N+k}))$ for some sufficiently small $T>0$. Indeed, the previous argument ensures uniqueness in $\Xi_T,$ where for any $T>0,$
	$$
	\Xi_T:=\left\{u\in
	L^\infty((0,T),L^q(\mathbb{R}^{N+k}));\;\sup_{0<t<T}t^\beta\|u(t)\|_{L^q}\leq\delta\right\}.
	$$
	Let $\tilde{u}$ be the local solution of \eqref{*}
	constructed in Theorem \ref{T0+}. By applying \eqref{estiB}, we have $q_{sc}<q<\infty$, which implies 
	$$u_0\in C_0(\mathbb{R}^{N+k})\cap L^{q_{sc}}(\mathbb{R}^{N+k})\subset C_0(\mathbb{R}^{N+k})\cap L^{q}(\mathbb{R}^{N+k}).$$ 
	Thus,  Theorem \ref{T0+} guarantees that $\tilde{u}\in C([0,T_{\max}),C_0(\mathbb{R}^{N+k})\cap L^q(\mathbb{R}^{N+k}))$. Consequently, we have
	$$\sup\limits_{t\in(0,T)}t^\beta\|\tilde{u}(t)\|_{L^q}\leq\delta$$
	for sufficiently small  $T>0$. By the uniqueness of solutions, it follows that $u=\tilde{u}$ on
	$[0,T]$, leading to the conclusion that $u\in C([0,T],C_0(\mathbb{R}^{N+k}))$.
	
	Next, we show that $u\in C([T,\infty),C_0(\mathbb{R}^{N+k}))$ using a bootstrap argument. Indeed, for $t>T,$ we express $u(t)$ as
	\begin{eqnarray*}
		u(t)-S_{\mathcal{G}}(t)u_0 &=&
		k_1 \int_0^tS_{\mathcal{G}}(t-s)\int_0^T
		(s-\sigma)^{-\gamma}\abs{u}^{p_1-1}u(\sigma)\;d\sigma\,ds\\
		&&+k_1\int_0^tS_{\mathcal{G}}(t-s)\int_T^s(s-\sigma)^{-\gamma}\abs{u}^{p_1-1}u(\sigma)\;d\sigma\,ds\\
		&& +k_2 \int_0^TS_{\mathcal{G}}(t-s)\abs{u}^{p_2-1}u(s)\,ds + k_2\int_T^tS_{\mathcal{G}}(t-s)\abs{u}^{p_2-1}u(s)\,ds \\
		&\equiv& I_1(t)+I_2(t)+I_3(t)+I_4(t).
	\end{eqnarray*}
	Since $u\in C([0,T],C_0(\mathbb{R}^{N+k})),$ it follows that $I_1,I_3\in
	C([T,\infty),C_0(\mathbb{R}^{N+k})).$ Moreover, using similar calculations used to
	construct the fixed point, and noting that $t^{-\beta}\leq
	T^{-\beta}<\infty$, we conclude that $I_1,I_3\in C([T,\infty),L^q(\mathbb{R}^{N+k}))$.
	Next, from \eqref{estiB}, we have $q>(N+2k)(p_1-1)/2$, which guarantees the existence of some $r\in(q,\infty]$ such that
	\begin{equation}\label{estiL}
		\frac{N+2k}{2}\left(\frac{p_1}{q}-\frac{1}{r}\right)<1.
	\end{equation}
	For $T<s<t$, since $u\in
	L^\infty((0,\infty),L^q(\mathbb{R}^{N+k})),$ we obtain
	$$|u|^{p_1-1}u\in L^\infty((T,s),L^{q/p_1}(\mathbb{R}^{N+k}))\quad\hbox{and}\quad |u|^{p_2-1}u\in L^\infty((T,t),L^{q/p_2}(\mathbb{R}^{N+k})).$$
	Thus, applying (\ref{semigroup}), (\ref{estiL}), and the fact that $p_2<p_1$, it follows that $I_2, I_4\in
	C([T,\infty),L^r(\mathbb{R}^{N+k}))$.
	Since the terms $S_{\mathcal{G}}(t)u_0$, $I_1$, and $I_3$ belong to
	$C([T,\infty),C_0(\mathbb{R}^{N+k}))\cap
	C([T,\infty),L^q(\mathbb{R}^{N+k})),$ we conclude that $u\in
	C([T,\infty),L^r(\mathbb{R}^{N+k}))$. We ultimately deduce, using a bootstrap argument, that $u\in
	C([T,\infty),C_0(\mathbb{R}^{N+k})).$ \\	
	\noindent{\it Step 2. Uniqueness.} Let $u,v \in C([0,\infty),C_0(\mathbb{R}^{N+k}))$ be two global mild solutions for \eqref{*}. Let $T>0$, we have
	\begin{eqnarray*}
		&&\|u(t)-v(t)\|_{\infty}\\
		&&\leq C
		\int_0^t\int_0^s(s-\sigma)^{-\gamma}
		\|u(\sigma)-v(\sigma)\|_{\infty}\,d\sigma\,ds+C
		\int_0^t
		\|u(s-v(s)\|_{\infty}\,ds\\
		&&=C
		\int_0^t\int_\sigma^t(s-\sigma)^{-\gamma}\|u(\sigma)-
		v(\sigma)\|_{\infty}\,ds\,d\sigma +C
		\int_0^t
		\|u(s-v(s)\|_{\infty}\,ds\\
		&&=C \int_0^t(t-\sigma)^{1-\gamma}\|u(\sigma)-
		v(\sigma)\|_{\infty}\,d\sigma+C
		\int_0^t
		\|u(s-v(s)\|_{\infty}\,ds\\
		&&\leq C\left(T^{1-\gamma}+1
		\right)  \int_0^t
		\|u(s-v(s)\|_{\infty}\,ds.
	\end{eqnarray*}
	By Gronwall's inequality, we conclude that $u(t)=v(t)$ for every $t\in[0,T]$. Since $T>0$ is arbitrary, this completes the proof of Case $(\mathrm{i})$.\\
	
	\noindent {\bf Case $(\mathrm{ii})$.} We address this case using the comparison principle from Theorem \ref{comparison}. Two subcases arise based on the relative size of the exponent $p_2$.

	\underline{The case $p_2> \tilde{p}_2$.}  We aim to show that the mild solution $u$ of \eqref{*} satisfies 
	\begin{equation}\label{subsol}
		0\leq u\leq v,
	\end{equation} 
	where $v$ is the mild solution of the same equation with the exponent $p_2$ replaced with $\tilde{p}_2=(p_1+1-\gamma)/(2-\gamma)$. Since $p_1>p_1^{*}$, {\bf Case} (i) can be applied to ensure that the solution $v$ exists globally and is locally bounded. Then, if \eqref{subsol} holds, $u$ exists globally, by Theorem \ref{T0+}. Let us prove \eqref{subsol}. Indeed, for sufficiently small initial data $u_0\geq0$, with $0<\epsilon_0\ll1$, we have $0\leq u_0<\epsilon_0$. Let $\Phi$ and $\tilde{\Phi}$ denote the integral operators associated with the respective mild formulations for $u$ and $v$: 
	$$\Phi(u)(t):=S_{\mathcal{G}}(t)u_0 +k_1 \int_{0}^{t}S_{\mathcal{G}}(t-s) \int_0^s(s-\tau)^{-\gamma}(u(\tau))^{p_1}\, \mathrm{d}\tau \,\mathrm{d}s +k_2 \int_{0}^{t}S_{\mathcal{G}}(t-s)(u(s))^{p_2} \mathrm{d}s,$$
	$$\tilde{\Phi}(v)(t):=S_{\mathcal{G}}(t)u_0 +k_1 \int_{0}^{t}S_{\mathcal{G}}(t-s) \int_0^s(s-\tau)^{-\gamma}(v(\tau))^{p_1}\, \mathrm{d}\tau \,\mathrm{d}s +k_2 \int_{0}^{t}S_{\mathcal{G}}(t-s)(v(s))^{\tilde{p}_2} \mathrm{d}s.$$
	In fact, $u$ and $v$ are, respectively, the limit of the following Picard sequences
	\begin{equation} \label{pic1A}
		u_{1}(t) =S_\mathcal{G}(t)u_0 ,\qquad u_{k}(t) = \Phi (u_{k-1})(t),\,\,\,k\geq2 , 
	\end{equation}
	\begin{equation} \label{pic12A}
		v_{1}(t) =S_\mathcal{G}(t)u_0 ,\qquad	 v_{k}(t) = \tilde{\Phi} (v_{k-1})(t),\,\,\, k\geq2 .
	\end{equation}
	We observe that both solutions begin at the same initial data, that is, $0\leq u_1= v_1$, and $v_1(t) \leq \|u_0\|_{L^\infty}<\epsilon_0$. Since $\tilde{p}_2<p_2$, it follows that 
	$$u_1^{p_2}=v_1^{p_2}\leq  v_1^{\tilde{p}_2}.$$ 
	Hence, by Theorem \ref{comparison}, $0\leq u_2\leq v_2$. On the other hand, by the proof of {\bf Case} (i), $v$ is the global mild solution which implies, in particular, that $\{v_n\}_n\subset\Xi$, where $\Xi$ is defined in \eqref{estiF}. Recalling that $\tilde{p}_2=(p_1+1-\gamma)/(2-\gamma)$,  we estimate
	\begin{eqnarray*}
		\|v_2(t)\|_{L^{\infty}}&\leq & C t^{-\frac{N+2k}{2q_{\mathrm{sc}}}} \|u_0\|_{L^{q_{\mathrm{sc}}}} +
		C\int_{0}^{t} (t-s)^{-\frac{N+2k}{2q}p_1} \int_0^s (s-\tau)^{-\gamma} \|v_1(\tau)\|^{p_1}_{L^{q}} \, \mathrm{d}\tau \,\mathrm{d}s  \\
		&{}& +\, C \int_{0}^{t}(t-s)^{-\frac{N+2k}{2q}\tilde{p}_2} \|v_1(s)\|^{\tilde{p}_2}_{L^{q}} \mathrm{d}s \\
		&\leq& C t^{-\frac{N+2k}{2q_{\mathrm{sc}}}} \|u_0\|_{L^{q_{\mathrm{sc}}}}  + C \delta^{p_1}  t^{2-\frac{N+2k}{2q}p_1 - \gamma - \beta p_1} + C \delta^{\tilde{p}_2}  t^{1-\frac{N+2k}{2q} \tilde{p}_2-\beta \tilde{p}_2} .
	\end{eqnarray*}
	Here, $q_{sc}$ is the one defined before the statement of Theorem \ref{global1}, but associate to the exponents $p_1$ and $\tilde{p_2}$. From the choice of $q_{\mathrm{sc}}$ and $\beta$, we have
	$$
	\|v_2(t)\|_{L^{\infty}}  \leq C t^{-\frac{N+2k}{2q_{\mathrm{sc}}}} \left[\|u_0\|_{L^{q_{\mathrm{sc}}}}  +  \delta^{p_1}  + \delta^{\tilde{p}_2} \right] .
	$$
	From this, the choice of $\delta\ll 1$ and the smallness of $\|u_0\|_{L^{q_{\mathrm{sc}}}} $, one can deduce that
	\begin{equation}\label{vn1}
		\sup_{t\geq1}\|v_2(t)\|_{L^{\infty}}  \leq \epsilon_0+\delta.
	\end{equation}
	Moreover, if $t\in(0,1]$, then
	\begin{equation*}
		\|v_2(t)\|_{L^{\infty}}  \leq  \|u_0\|_{L^{\infty}}  +  \frac{\epsilon_0^{p_1}}{(1-\gamma)(2-\gamma)}  + \epsilon_0^{\tilde{p}_2}  ,
	\end{equation*}
	since $0\leq v_1(t)<\epsilon_0$. By choosing $\epsilon_0$ small enough, we get
	\begin{equation}\label{vn2}
		\sup_{0<t\leq1}\|v_2(t)\|_{L^{\infty}}  \leq 2\epsilon_0.
	\end{equation}
	Combining \eqref{vn1}-\eqref{vn2}, and choosing $\delta,\epsilon_0$ small enough, we infer that 
	$$
	\|v_{2}(t)\|_{L^{\infty}}  \leq \max\{\epsilon_0+\delta,2\epsilon_0\}<1.
	$$
	Thus, 
	$$0\leq u_2^{p_2}\leq v_2^{p_2}\leq  v_2^{\frac{p_1+1-\gamma}{2-\gamma}}.$$ 
	Hence, by Theorem \ref{comparison}, $0\leq u_3\leq v_3$. By applying the induction argument and choosing $\delta,\epsilon_0$ small enough, one can prove that $0\leq u_n\leq v_n<1$, for all $n\in\N$, whence $0\leq \|u\|_{L^{\infty}}\leq  \|v\|_{L^{\infty}}\leq 1$. Then, $\|u\|_{L^\infty((0,t)\times\mathbb{R}^{N+k})}<\infty$ for all $t>0$ and the global existence of $u$ follows from Theorem \ref{T0+}.

	\underline{The case $p_2< \tilde{p}_2$.}  Since $p_2< \tilde{p}_2$ is equivalent to $p_1 > (p_2-1)(2-\gamma)+1 = \tilde{p}_1$, then $w^{p_1}\leq w^{\tilde{p}_1}$ whenever $w<\varepsilon_0$. We would like to repeat the same argument as above by interchanging the roles of $p_1$ and $p_2$ and using $\tilde{p}_1$ instead of $\tilde{p_2}$. For this, we must guarantee that we can apply Case (i), that is, $\tilde{p_1}>p_1^*$. But this is equivalent to  $p_2>p_2^{**}$, which is our assumption. The global existence of $u$ then follows similarly.\\
	
	\noindent {\bf Case $(\mathrm{iii})$.} Suppose $k_i\leq0$. By applying the comparison principle from Theorem \ref{comparison}, we deduce that the mild solution $u$ satisfies $0\leq u(t)\leq S_{\mathcal{G}}(t)u_0$, for all $t\geq0$. As a result, $u$ is bounded from above and thus extends globally in time, yielding a global mild solution to problem \eqref{*}.\\
	
	\noindent {\bf Case $(\mathrm{iv})$.} Suppose $k_1>0$ and $k_2\leq 0$. Applying the comparison principle from Theorem \ref{comparison}, we infer that the mild solution $u$ satisfies $0\leq u\leq v$ where $v$ denotes the nonnegative mild solution of the auxiliary problem:
	\begin{equation}\label{memory}
		\left\{\begin{array}{ll}
			\,\, \displaystyle v_{t}-\Delta_{\mathcal{G}}
			v =  k_1\int_0^t (t-s)^{-\gamma}v^{p_1} \ \mathrm{d}s, & t>0,z=(x,y)\in {\mathbb{R}}^{N+k},\\
			{}\\
			\displaystyle{v(z,0)=  u_0(z),\qquad\qquad}& z=(x,y)\in {\mathbb{R}}^{N+k}.
		\end{array}
		\right.
	\end{equation}
	Using arguments analogous to those employed in Case $(\mathrm{i})$ or to the proof of Theorem $1.1$-(ii) in \cite{CDW}, it follows that $v$ is globally defined. Consequently, $u$ being bounded above by $v$, is also a global mild solution to \eqref{*}.\\
	
	\noindent {\bf Case $(\mathrm{v})$.} Now assuming $k_2>0$ and $k_1\leq0$. By again invoking the comparison principle in Theorem \ref{comparison}, we deduce $0\leq u\leq v$ where $v$ is the nonnegative mild solution for the problem 
	\begin{equation}\label{nomemory}
		\left\{\begin{array}{ll}
			\,\, \displaystyle v_{t}-\Delta_{\mathcal{G}}
			v= k_2v^{p_2}, & t>0,z=(x,y)\in {\mathbb{R}}^{N+k},\\
			{}\\
			\displaystyle{v(z,0)=  u_0(z),\qquad\qquad}& z=(x,y)\in {\mathbb{R}}^{N+k}.
		\end{array}
		\right.
	\end{equation}
	Following similar reasoning as in Case $(\mathrm{i})$ or referring to \cite[Theorem 4.2]{Oli-Vi-23}, it can be shown that $v$ exists globally in time. Hence, the original solution $u$  of \eqref{*} must also be global.\\

\end{proof}
%
%

\bibliographystyle{abbrv}
\bibliography{ref}

\end{document}